
\documentclass{article}
\usepackage[utf8]{inputenc}

\usepackage{amsmath,color}
\usepackage{mathtools}
\usepackage{amsfonts}
\usepackage{amssymb}
\usepackage{graphicx}
\usepackage{hyperref}
\usepackage{amsthm}
\usepackage[capitalize]{cleveref}

\newcommand\numberthis{\addtocounter{equation}{1}\tag{\theequation}}

\newtheorem{theorem}{Theorem}
\newtheorem{lemma}[theorem]{Lemma}
\newtheorem{conjecture}[theorem]{Conjecture}
\newtheorem{proposition}[theorem]{Proposition}

\def\final{1}  

\ifnum\final=0  
\newcommand{\lnote}[1]{[{\small Luis: \bf #1}]}
\newcommand{\bnote}[1]{[{\small Brett: \bf #1}]}
\newcommand{\anonnote}[1]{[{\small anon: \bf #1}]}
\newcommand{\sidecomment}[1]{\marginpar{\tiny #1}}
\newcommand{\details}[1]{{\color{blue}\ [[#1]] }}
\else 
\newcommand{\lnote}[1]{}
\newcommand{\bnote}[1]{}
\newcommand{\anonnote}[1]{}
\newcommand{\sidecomment}[1]{}
\newcommand{\details}[1]{}
\fi  

\newcommand{\suchthat}{\mathrel{:}}
\newcommand{\giventhat}{\mid}

\newcommand{\RR}{\mathbb{R}}

\newcommand{\norm}[1]{{\lVert#1\rVert}}
\newcommand{\norms}[1]{\lVert#1\rVert^2}

\newcommand{\abs}[1]{\lvert#1\rvert}
\DeclarePairedDelimiter\vol{\lvert}{\rvert}

\newcommand{\aff}{\operatorname{aff}}

\newcommand{\inner}[2]{#1\cdot #2}
\newcommand{\goesto}{\rightarrow}

\newcommand{\var}{\operatorname{var}}
\newcommand{\cov}[2]{\operatorname{cov}({#1}, {#2})}
\newcommand{\prob}{\mathbb{P}}

\newcommand{\poly}{\operatorname{poly}}
\newcommand{\linspan}{\operatorname{span}}
\newcommand{\dist}{\operatorname{d}}
\newcommand{\ud}{\mathop{}\!\mathrm{d}}
\newcommand{\ind}{\operatorname{1}}

\newcommand{\pr}{\prob}
\newcommand{\e}{\operatorname{\mathbb{E}}}
\renewcommand{\cov}{\operatorname{cov}}
\newcommand{\ones}{\mathbf{1}}
\newcommand{\event}{E}
\newcommand{\lln}{\operatorname{lln}}

\title{Estranged facets and $k$-facets of Gaussian random point sets}
\author{Brett Leroux, Luis Rademacher}
\date{}

\begin{document}
\maketitle
\begin{abstract}
Gaussian random polytopes have received a lot of attention especially in the case where the dimension is fixed and the number of points goes to infinity. 
Our focus is on the less studied case where the dimension goes to infinity and the number of points is proportional to the dimension $d$.
 We study several natural quantities associated to Gaussian random polytopes in this setting. 
First, we show that the expected number of facets is equal to $C(\alpha)^{d+o(d)}$ where $C(\alpha)$ is some constant which depends on the constant of proportionality $\alpha$. 
We also extend this result to the expected number of $k$-facets. 
We then consider the more difficult problem of the asymptotics of the expected number of pairs of \emph{estranged facets} of a Gaussian random polytope. 
When $n=2d$, we determined the constant $C$ so that the expected number of pairs of estranged facets is equal to $C^{d+o(d)}$.
\end{abstract}
 \emph{Keywords:} Gaussian random point set; convex polytope; estranged facets; $k$-facet; inner diagonal. 
 \\
 \emph{2020 Mathematics Subject Classification:} 52A22; 60D05; 60C05; 62H10; 52B05

\section{Introduction}
A \emph{Gaussian random point set} is an i.i.d.\ sequence of standard Gaussian random points in $\RR^d$, i.e., each point in the set is distributed according to $N(0,I_d)$. The convex hull of a Gaussian random point set $\{X_1,\dotsc,X_n\}$ with $n$ samples is denoted $[X_1,\dotsc, X_n]$ and is called a \emph{Gaussian random polytope}.
In the study of random polytopes given as the convex hull of random points, many asymptotic results provide insight in the case where the dimension ($d$) is fixed but arbitrary and the number of points ($n$) grows.
For example, some of the basic results provide asymptotic expansions on the number of $j$-dimensional faces of a Gaussian random polytope for fixed $d$ and as $n \to \infty$ \cite{MR1149653, MR1254086,HMR,MR258089,MR156262}.
For the case where both the dimension and the number of points grow together, there are gaps in our understanding.
In this work we study this case. 
We provide asymptotic expansions of the expectation of several natural quantities associated to Gaussian random polytopes and, more generally, Gaussian random point sets.
The quantities we consider are: the number of facets, the number of $k$-facets, and the number of pairs of estranged facets.
We now recall the standard definitions of $k$-facets and estranged facets.

A \emph{$k$-facet} of a finite set of points $X \subset \RR^d$ in general position (namely, any subset of $d+1$ or less points is affinely independent) is a subset $\Delta \subset X$ of size $d$ such that the open halfspace on one side of $\aff \Delta$ contains exactly $k$ points from $X$. 
We use the notation $E_k(X)$ for the set of $k$-facets of $X$ and we define $e_k(X):=|E_k(X)|$. 
There is a long line of work on the \emph{$k$-facet problem} which asks one to determine the asymptotics of the maximum possible number of $k$-facets of a set of $n$ points in $\RR^d$ as a function of $n$, $k$ and $d$.
The first papers on the $k$-facet problem (\cite{lovasz1971number} and \cite{MR0363986}) only considered the case when the dimension is equal to two and even this case is still not well understood. 
See \cite{WagnerSurvey} for a survey on what is known. 
Although the majority of work on the $k$-facet problem is for deterministic points sets, the problem has also previously been studied for random point sets in \cite{BaranySteiger,clarkson,MR4376581}.

Let $P$ be a full-dimensional polytope.
We use the notation $f_j(P)$ for the number of $j$-dimensional faces of $P$. 
In particular, $f_{d-1}(P)$ is the number of facets. 
Note that if $X \subset \RR^d$ is a set of $n$ points in general position, then the $0$-facets of $X$ are precisely the facets of the polytope $P$ where $P$ is the convex hull of $X$ and so $f_{d-1}(P) = e_0(X)$ in this case.

A pair of facets of a polytope is called \emph{estranged} if they do not share any vertices (i.e., facets $F$ and $G$ are estranged if the set of vertices contained in $F$ is disjoint from the set of vertices contained in $G$). In this paper all polytopes we consider are simplicial with probability one. Under the standard polarity operation for polytopes, the polar of a simplicial polytope is a simple polytope and there is a one-to-one correspondence between pairs of estranged facets of the simplicial polytope $P$ and \emph{inner diagonals} of the polar dual $P^\ast$ of $P$. Here an \emph{inner diagonal} of a polytope is a line segment which joins two vertices of the polytope and that is contained, except for its endpoints, in the relative interior of the polytope. We clarify the motivation for studying estranged facets and inner diagonals in the next section. 

\subsection{Previous work and our contributions}

\paragraph{Expected number of facets and $k$-facets.}
Let $[X]$ denote the convex hull of $X$.

As mentioned above, for fixed dimension, an asymptotic formula for the expected number of facets of a Gaussian random polytope as the number of samples $n$ goes to infinity has been known for some time: It was shown in \cite{MR258089,MR156262} that for fixed $d \ge 2$, and a set $\{X_1,\dotsc, X_n\}$ of $n$ i.i.d.\ Gaussian random points in $\RR^d$,
\[
\mathbb{E}f_{d-1} ([X_1, \dotsc, X_n]) = \frac{2^d \pi^{\frac{d-1}{2}}}{\sqrt{d}}(\ln n)^{\frac{d-1}{2}}\bigl(1+o(1)\bigr) \text{ as } n \to \infty.
\]
Similar formulae are known for $\mathbb{E}f_{j} ([X_1, \dotsc, X_n])$ for $j=0, \dotsc , d$, see \cite{MR1149653, MR1254086,HMR}.

The above mentioned papers only address the case when the dimension is fixed and the number of samples goes to infinity.
More recently, progress has been made by B\"{o}r\"{o}czky, Lugosi and Reitzner in \cite{BLR} and Fleury in \cite{MR2875755} on the question of the asymptotic value of $\mathbb{E}f_{d-1} ([X_1, \dotsc, X_n])$ when both $d$ and $n$ are allowed to go to infinity. 
It is shown in \cite[Theorem 1.1]{BLR} that if $d\ge 78$ and $n \ge e^ed$, then 
\begin{align}\label{eq:o}
\mathbb{E}f_{d-1} ([X_1, \dotsc, X_n]) = 2^d\pi^{\frac{d-1}{2}}\sqrt{d}e^{\frac{d-1}{2}\lln\frac{n}{d}-\frac{d-1}{4}\frac{\text{lln}\frac{n}{d}}{\ln\frac{n}{d}}+(d-1)\frac{\Theta}{\ln\frac{n}{d}}+O(\sqrt{d}e^{-\frac{d}{10}})}
\end{align}
with $\Theta \in [-34,2]$ and $\lln=\log \log$.
Also, \cite[Theorem 1.3]{BLR} states that if $n-d = o(d)$, then
\begin{align}\label{eq:p}
\mathbb{E}f_{d-1} ([X_1, \dotsc, X_n]) = \binom{n}{d}\frac{1}{2^{n-d-1}}e^{\frac{1}{\pi}\frac{(n-d)^2}{d} + O\bigl(\frac{(n-d)^3}{d^2}\bigr)+o(1)}.
\end{align}

There are two gaps relevant to us in their expressions: 
(1) They only provide asymptotic expressions for $n-d = o(d)$ or $n\geq e^e d$. 
(2) For the case where $n$ grows proportional to $d$, they only establish exponential upper and lower bounds (with different bases of the exponential function in each bound). 
Our \cref{thm:k-facet} below fills in this missing piece. 
We show that when $n/d \to \alpha > 1$ and $k/(n-d) \to r \in [0,1]$ the expected number of $k$-facets grows like $C(\alpha,r)^{d+o(d)}$ where $C(\alpha,r)$ is a constant depending on $\alpha $ and $r$ and we provide a simple way to determine $C(\alpha,r)$ given $\alpha$ and $r$ (\cref{thm:k-facet}). 
Note that setting $k=0$ gives the asymptotics of the expected number of facets.

In \cite{spherical}, Bonnet and O'Reilly consider the convex hull of random points from the unit sphere in $\RR^d$. They call such polytopes \emph{spherical random polytopes} and they provide asymptotic expressions for the expected number of facets as $n$ and $d$ grow at different rates.
In the cases when $n-d= o(d)$ or $n/d \to \infty$, they obtain formulae for the expected number of facets of spherical random polytopes which match the corresponding formulae obtained in \cite{BLR} for the expected number of facets of Gaussian random polytopes, i.e.\ equations \eqref{eq:o} and \eqref{eq:p} above. 
Such a correspondence is not particularly surprising given the fact that Gaussian random points concentrate around a thin spherical shell of radius $\sqrt{d}$ in high dimension. 
Our result shows that this correspondence continues for the case when $n$ is proportional to $d$: for any $\alpha>1$, \cref{thm:k-facet} says that the expected number of facets of a Gaussian random polytope with $n\sim \alpha d$ vertices is equal to $C(\alpha)^{d+o(d)}$ for some constant $C(\alpha)$. 
For spherical random polytopes, the case when the number of vertices is equal to $n\sim \alpha d$ for some $\alpha>1$ is dealt with in \cite[Theorem 4.2]{spherical}. 
The asymptotic formula given there is also of the form $C(\alpha)^{d+o(d)}$ for some constant $C(\alpha)$. 
Some algebra shows that the constants are the same in both the spherical and Gaussian random cases.

\paragraph{A formula from \cite{HMR} extended to $k$-facets.}
\cite[Theorem 3.2]{HMR} provides a formula that expresses the probability that a fixed subset of $d$ out of $n$ Gaussian random points form a facet of the convex hull of the whole set. The formula turns the original probability involving $n$ random vectors in $\RR^d$ into a simpler probability involving $n-d+1$ real valued random variables.
Their proof is an application of the affine Blaschke-Petkantschin formula.

We extend the formula to the case of $k$-facets (\cref{thm:3.2}). Our proof does not use the Blaschke-Petkantschin formula and is based on a slightly different probabilistic argument.

\paragraph{Expected number of pairs of estranged facets.}
We show in \cref{thm:estranged} that if $X$ is a set of $2d$ i.i.d.\ Gaussian random points in $\RR^d$, then the expected number of pairs of estranged facets of $[X]$ is equal to $C^{d+o(d)}$ where $C\approx 1.7696$.

The main technique in the proof is the affine Blaschke-Petkantschin formula applied twice on a partition of the $2d$ points into two $d$-subsets to express the probability that they are facets simultaneously.
This is combined with known estimates of the expected volume of a random simplex (one of the main terms in the affine Blaschke-Petkantschin formula) and a simple asymptotic expansion of integrals (\cref{prop:asymptotic}, see below).

To put this result in context, we recall the following conjecture of von Stengel \cite{Vs}:
\begin{conjecture}[\cite{Vs}]
The maximum number of pairs of estranged facets of any simplicial $d$-polytope with $2d$ vertices is $2^{d-1}$, which is attained by the $d$-dimensional cross polytope. 
\end{conjecture}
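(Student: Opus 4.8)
The plan is to split the statement into its two parts: the extremal construction and the upper bound. Exhibiting a polytope that attains $2^{d-1}$ is immediate from the structure of the $d$-dimensional cross polytope $\conv\{\pm e_1,\dotsc,\pm e_d\}$: its facets are exactly the $2^d$ simplices $\conv\{\varepsilon_1 e_1,\dotsc,\varepsilon_d e_d\}$ indexed by sign patterns $\varepsilon \in \{-1,+1\}^d$, and two such facets are vertex-disjoint precisely when their sign patterns are antipodal; hence the estranged pairs form a perfect matching on the $2^d$ facets, giving exactly $2^{d-1}$ pairs. So the real content is the upper bound: every simplicial $d$-polytope $P$ on $2d$ vertices has at most $2^{d-1}$ pairs of estranged facets.

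For the upper bound I would first record the combinatorial normalization special to the case $n=2d$: since $P$ is simplicial, each facet spans exactly $d$ vertices, so a pair of estranged facets uses all $2d$ vertices and is therefore the same data as an unordered partition $\{A,B\}$ of the vertex set into two $d$-subsets with both $\conv A$ and $\conv B$ facets of $P$. Passing to the polar (the correspondence recalled above), in the simple polytope $P^{\ast}$ with $2d$ facets every vertex lies on exactly $d$ facets, so two vertices that lie on no common facet have complementary facet-sets; an inner diagonal then joins two vertices whose $d$-subsets of facets partition the $2d$ facets and whose connecting segment stays in the relative interior. The attack is to bound, among the $\binom{2d}{d}/2$ complementary partitions of the facet set, how many can simultaneously be realized (both blocks are actual vertices of $P^{\ast}$) and give an inner diagonal, using the Dehn--Sommerville relations and the Lower Bound Theorem to constrain the realizable partitions and a separate geometric argument to discard the segments that leave the polytope.

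The main obstacle, I expect, is precisely that last geometric condition. Purely combinatorial face-counting cannot suffice here, since by the Upper Bound Theorem a simplicial $d$-polytope on $2d$ vertices can have far more than $2^d$ facets when $d$ is even, so the estrangement constraint and, above all, the interiority (``inner diagonal'') constraint must do the decisive cutting. What one really needs is a rigidity-type statement: whenever a polytope has close to $2^{d-1}$ complementary vertex pairs, most of the corresponding segments must fail to be inner diagonals unless the polytope is combinatorially the cross polytope. A plausible route is induction on $d$: split off a single facet of $P$ and try to bound its pairs of estranged facets by twice the corresponding maximum for a simplicial $(d-1)$-polytope on $2(d-1)$ vertices, with base case $d=2$ the quadrilateral and its two pairs of opposite edges. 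Making the bookkeeping in such a recursion tight — that is, ruling out the near-extremal configurations — is the step that has kept this a conjecture.
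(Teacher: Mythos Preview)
This statement is presented in the paper as an \emph{open conjecture} of von Stengel; the paper explicitly says ``Although von Stengel's conjecture is still open'' and offers no proof of it. So there is no paper proof to compare against, and the relevant question is simply whether your proposal actually proves the conjecture.

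It does not, and you essentially say so yourself. Your verification of the attainment half is correct and complete: the $d$-dimensional cross polytope has exactly $2^d$ facets indexed by sign patterns, two facets are estranged iff their sign patterns are antipodal, and this yields precisely $2^{d-1}$ unordered pairs. That part is fine and is the easy, uncontested direction.

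For the upper bound, however, you only sketch a program (dualize to inner diagonals of a simple polytope, invoke Dehn--Sommerville and the Lower/Upper Bound Theorems, then attempt an induction on $d$) and then correctly observe that the decisive step---controlling near-extremal configurations so that the recursion is tight---is exactly what is not known. None of the tools you name actually yields the bound $2^{d-1}$: face-counting inequalities do not see the ``interior segment'' condition, and the proposed induction has no mechanism for the factor-of-two loss per dimension without already assuming a structural rigidity statement that is itself equivalent in strength to the conjecture. In short, the proposal identifies the right dichotomy (easy attainment, hard upper bound) and the right obstacles, but it is an outline of why the problem is hard rather than a proof; this matches the paper's treatment, which simply records the conjecture as open.
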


Although von Stengel's conjecture is still open, a number of similar questions about estranged facets (and their polar equivalent, inner diagonals) were answered by Bremner and Klee \cite{MR1698257} who argue that estranged facets are worthy of more study given that they are an intrinsically interesting combinatorial feature of convex polytopes.

Aside from their intrinsic interest, estranged facets are also relevant to the study of Nash equilibria of bimatrix
games \cite{Vs}. 
Indeed, this was the original context for the above conjecture of von Stengel. 
Although estranged facets themselves do not directly correspond to any particular quantity of interest in bimatrix games, they have been used by B\'{a}r\'{a}ny, Vempala and Vetta \cite{BVV} in the analysis of a Las Vegas algorithm for finding Nash equilibria in bimatrix games. 
In particular, their analysis required them to determine concentration bounds for the number of Nash equilibria in random games. 
This in turn required them to prove an upper bound on the expected number of pairs of estranged facets of a random polytope whose vertices are either i.i.d.\ Gaussian or uniform in the $d$-cube \cite[Lemma 13]{BVV}. 
In contrast to our \cref{thm:estranged}, \cite[Lemma 13]{BVV} is only meaningful in the case when the dimension $d$ is fixed and the number of points $n$ goes to infinity. 

Finally, we remark that estranged facets are also relevant to the study of the diameter problem for convex polytopes, i.e., the question of the maximum diameter of the graph of a simple $d$-polytope with $n$ facets.
As previously mentioned, estranged facets of a simplicial polytope correspond, via the polar operation, to inner diagonals of a simple polytope.
It has been shown that the pair of vertices which attains the maximum distance in the graph of a simple polytope must be the endpoints of some inner diagonal of the polytope \cite{MR206823}.

\paragraph{Simple asymptotic expansion of integrals.}
Our asymptotic expansions of expected values are based on the formula $\int_{\RR^d} f(x)^p \ud x = \norm{f}_\infty^{p+o(p)}$, stated formally as \cref{prop:asymptotic}. 
This is a simple result that provides asymptotic expansions of integrals that follows immediately from the known fact that the $L^p$ norm of a function converges to the $L^\infty$ norm as $p \to \infty$ under mild assumptions (\cref{prop:limit}).

\subsection{Outline of the paper.}
\cref{sec:prelim} introduces notation and collects some propositions that will be used later including a result about the expected volume of a Gaussian simplex as well as a result about asymptotic expansions of integrals based on $L^p$ norms. 
In \cref{sec:k-facet} we establish our asymptotic formula for the expected number of $k$-facets of a Gaussian random polytope. 
Finally, \cref{sec:estranged} establishes the asymptotic formula for the expected number of estranged facets of a Gaussian random polytope with $2d$ vertices.

\section{Preliminaries}\label{sec:prelim}

Let $f_X$ denote the PDF of random variable $X$.
Let $\e_X\bigl(f(X,Y)\bigr)$ denote the expectation with respect to $X$ only, and similarly for $\pr_X$. Namely, $\e_X\bigl(f(X,Y)\bigr) = \e\bigl(f(X,Y) \bigm| Y \bigr)$.
For a random vector $X$, let $\cov(X)$ denote the covariance matrix of $X$.
Asymptotic notation $f(d) \sim g(d)$ means $f(d)/g(d) \to 1$ as $d \to \infty$.
For a set $A=\{\ldots \}$ in a measurable space, let $\ind A = \ind \{ \ldots \}$ denote the indicator function of $A$.
For a measurable set $K \subseteq \RR^d$, let $\vol{K}$ denote the volume of $K$. 
Let $[X]$ denote the convex hull of $X$.
\begin{proposition}[{Blaschke's formula, \cite[Proposition 3.5.5]{MR3185453} \cite[Lemma 4]{MR2891161}}]\label{prop:blaschke}
Let $X_1, \dotsc, X_{d+1}$ be i.i.d.\ $d$-dimensional random vectors with finite second moment. 
Then
\[
\det \cov (X_1) = \frac{d!}{d+1} \e\bigl(\vol[\big]{[X_1,\dotsc,X_{d+1}]}^2 \bigr).
\]
\end{proposition}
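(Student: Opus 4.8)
The plan is to reduce the identity to an algebraic fact about the expected squared determinant of a random matrix with i.i.d.\ columns. Homogenize the simplex: let $M$ be the $(d+1)\times(d+1)$ matrix
\[
M = \begin{pmatrix} 1 & 1 & \cdots & 1 \\ X_1 & X_2 & \cdots & X_{d+1} \end{pmatrix},
\]
whose $j$-th column is the vector $(1, X_j^\top)^\top \in \RR^{d+1}$. Elementary column operations give $d!\,\vol{[X_1,\dotsc,X_{d+1}]} = \abs{\det M}$, so $\vol{[X_1,\dotsc,X_{d+1}]}^2 = (d!)^{-2}(\det M)^2$. Hence it suffices to show $\e\bigl((\det M)^2\bigr) = (d+1)!\,\det\cov(X_1)$; then
\[
\e\bigl(\vol{[X_1,\dotsc,X_{d+1}]}^2\bigr) = \tfrac{(d+1)!}{(d!)^2}\det\cov(X_1) = \tfrac{d+1}{d!}\det\cov(X_1),
\]
which rearranges to the claimed formula.

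For the key step, expand both copies of the determinant by the Leibniz formula:
\[
(\det M)^2 = \sum_{\sigma,\tau} \operatorname{sgn}(\sigma)\operatorname{sgn}(\tau)\prod_i M_{i,\sigma(i)}M_{i,\tau(i)},
\]
the sum running over bijections $\sigma,\tau$ from the $(d+1)$-element row-index set to the column-index set. Regrouping the factors of a generic term by column, the factor attached to column $j$ is $M_{\sigma^{-1}(j),j}\,M_{\tau^{-1}(j),j}$, a function of $X_j$ alone. Since $X_1,\dots,X_{d+1}$ are independent with finite second moment, the expectation of each term factorizes as $\prod_j S_{\sigma^{-1}(j),\tau^{-1}(j)}$, where
\[
S \;:=\; \e\bigl[(1,X_1^\top)^\top(1,X_1^\top)\bigr] \;=\; \begin{pmatrix} 1 & \e(X_1)^\top \\ \e(X_1) & \e(X_1X_1^\top) \end{pmatrix}
\]
is the common second-moment matrix of the columns of $M$. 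Now substitute $c := \tau^{-1}\circ\sigma$: one has $\operatorname{sgn}(\sigma)\operatorname{sgn}(\tau) = \operatorname{sgn}(c)$ and, reindexing the product by $i = \sigma^{-1}(j)$, $\prod_j S_{\sigma^{-1}(j),\tau^{-1}(j)} = \prod_i S_{i,c(i)}$, which no longer depends on $\sigma$. For each fixed $c$ there are exactly $(d+1)!$ pairs $(\sigma,\tau)$ with $\tau^{-1}\circ\sigma = c$, so
\[
\e\bigl((\det M)^2\bigr) = (d+1)!\sum_{c}\operatorname{sgn}(c)\prod_i S_{i,c(i)} = (d+1)!\,\det S.
\]
Finally, the Schur complement of the top-left entry of $S$ gives $\det S = \det\bigl(\e(X_1X_1^\top) - \e(X_1)\e(X_1)^\top\bigr) = \det\cov(X_1)$, completing the argument.

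I expect the only real care needed is the bookkeeping in the double permutation sum and the justification of the interchange of expectation and (finite) sum — the latter is legitimate because $(\det M)^2$ is a finite sum of products of pairs of coordinates of the $X_j$, each integrable under the finite-second-moment hypothesis — so this is the main, though mild, obstacle. A convenient shortcut: both sides of the identity are unchanged if all $X_i$ are translated by a common vector, so one may assume $\e(X_1) = 0$, in which case $S = \operatorname{diag}(1, \cov(X_1))$ and the final Schur-complement step is immediate.
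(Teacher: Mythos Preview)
Your argument is correct and complete. The paper itself does not give a self-contained proof: it simply cites \cite[Lemma 4]{MR2891161} (stated there for the uniform distribution in a convex body) and remarks that the argument carries over verbatim to any distribution with finite second moment. Your proof supplies exactly such a direct argument---the Leibniz expansion of $(\det M)^2$, regrouping by column, and the substitution $c=\tau^{-1}\sigma$ to reduce the double permutation sum to $(d+1)!\det S$, followed by the Schur complement identity $\det S=\det\cov(X_1)$. This is the standard elementary derivation of the identity $\e\bigl((\det M)^2\bigr)=(d+1)!\det S$ for a square matrix with i.i.d.\ columns, and it uses only independence and finiteness of second moments, so the generalization the paper asserts is made explicit in your write-up. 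The translation-invariance remark at the end is a nice touch but not needed, since the Schur complement step already handles nonzero mean.
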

\begin{proof}
\cite[Lemma 4]{MR2891161} states and proves the claim for the uniform distribution in a convex body. That proof works essentially unchanged for any distribution with finite second moment.
\end{proof}

We will need the following well-known result about the expected volume of a Gaussian simplex.
See e.g. \cite[p. 377]{10.2307/1426176}. 
\begin{proposition}\label{prop:gaussiansimplex}
Let $X_1, \dotsc, X_{d+1}$ be i.i.d.\ $d$-dimensional Gaussian random vectors. Then
\[
\e\bigl(\vol[\big]{[X_1,\dotsc,X_{d+1}]}\bigr) = \frac{\sqrt{d+1}}{2^{d/2}\Gamma(\frac{d}{2}+1)} \sim \frac{1}{\sqrt{\pi}}\left(\frac{e}{d}\right)^{d/2}.
\]
\end{proposition}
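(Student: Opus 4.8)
The plan is to reduce the expected simplex volume to the first absolute moment of the determinant of a square Gaussian (Ginibre) matrix, for which there is a classical closed form, and then to convert that closed form into the stated expression via the Legendre duplication formula. Write the volume as $\vol[\big]{[X_1,\dots,X_{d+1}]} = \tfrac{1}{d!}\abs{\det A}$, where $A$ is the $d\times d$ matrix whose columns are $X_2-X_1,\dots,X_{d+1}-X_1$. The columns of $A$ are jointly Gaussian: each has covariance $2I_d$ and any two distinct columns have cross-covariance $I_d$ (coming from the shared $-X_1$ term), so $A$ has the matrix-normal law with row covariance $I_d$ and column covariance $\Sigma = I_d + \ones\ones^\top$. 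Hence $A$ has the same distribution as $W\Sigma^{1/2}$, with $W$ a $d\times d$ matrix of i.i.d.\ $N(0,1)$ entries, and since the eigenvalues of $\Sigma$ are $d+1$ (once) and $1$ (with multiplicity $d-1$) we get $\det\Sigma = d+1$ and $\abs{\det A}$ distributed as $\sqrt{d+1}\,\abs{\det W}$. Therefore $\e\bigl(\vol[\big]{[X_1,\dots,X_{d+1}]}\bigr) = \tfrac{\sqrt{d+1}}{d!}\,\e\abs{\det W}$.

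Next I would evaluate $\e\abs{\det W}$ by the Gram--Schmidt (QR) decomposition of $W$: if $R_{jj}$ denotes the length of the component of the $j$-th column of $W$ orthogonal to the span of the first $j-1$ columns, then $\abs{\det W} = \prod_{j=1}^d R_{jj}$, and by rotational invariance the $R_{jj}$ are independent, with $R_{jj}$ distributed as a $\chi$ random variable with $d-j+1$ degrees of freedom. Using $\e\chi_m = \sqrt{2}\,\Gamma(\tfrac{m+1}{2})/\Gamma(\tfrac{m}{2})$ and reindexing, $\e\abs{\det W} = \prod_{m=1}^d \e\chi_m = 2^{d/2}\prod_{m=1}^d \Gamma(\tfrac{m+1}{2})/\Gamma(\tfrac{m}{2})$, and this product telescopes to $2^{d/2}\,\Gamma(\tfrac{d+1}{2})/\Gamma(\tfrac12) = 2^{d/2}\,\Gamma(\tfrac{d+1}{2})/\sqrt{\pi}$. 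Substituting gives $\e\bigl(\vol[\big]{[X_1,\dots,X_{d+1}]}\bigr) = \dfrac{\sqrt{d+1}\,2^{d/2}\,\Gamma(\frac{d+1}{2})}{d!\,\sqrt{\pi}}$, and the Legendre duplication formula $\Gamma(\tfrac{d+1}{2})\,\Gamma(\tfrac{d}{2}+1) = 2^{-d}\sqrt{\pi}\,d!$ turns this into the claimed $\dfrac{\sqrt{d+1}}{2^{d/2}\,\Gamma(\frac{d}{2}+1)}$. (Alternatively one could simply cite the evaluation of $\e\abs{\det W}$, or the simplex-volume formula, from the references listed just after the statement; the derivation above is included for self-containedness.)

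Finally, the asymptotic statement follows from Stirling's formula: $\Gamma(\tfrac{d}{2}+1)\sim\sqrt{\pi d}\,(d/2)^{d/2}e^{-d/2}$ and $\sqrt{d+1}\sim\sqrt{d}$, so $\dfrac{\sqrt{d+1}}{2^{d/2}\,\Gamma(\frac{d}{2}+1)} \sim \dfrac{1}{\sqrt{\pi}\,2^{d/2}(d/2)^{d/2}e^{-d/2}} = \dfrac{1}{\sqrt{\pi}}\Bigl(\dfrac{e}{d}\Bigr)^{d/2}$, using $2^{d/2}(d/2)^{d/2}=d^{d/2}$. I do not expect any real obstacle here, since the result is classical; the only step requiring care is the Gamma-function bookkeeping, in particular applying the duplication formula correctly to reconcile the telescoped product with the target closed form.
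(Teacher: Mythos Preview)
Your derivation is correct in every step: the reduction to $\tfrac{\sqrt{d+1}}{d!}\,\e\abs{\det W}$ via the matrix-normal representation with column covariance $I_d+\ones\ones^\top$, the Bartlett/QR evaluation $\e\abs{\det W}=2^{d/2}\Gamma(\tfrac{d+1}{2})/\sqrt{\pi}$, the use of the duplication formula, and the Stirling asymptotics all check out.

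As for comparison with the paper: there is nothing to compare. The paper does not prove this proposition at all; it simply quotes the formula and points to Miles' survey \cite{10.2307/1426176} for a reference. Your write-up therefore supplies a self-contained proof where the paper offers none, which is a reasonable thing to do if one does not want to rely on an external citation for a key ingredient used later (in \cref{lem:lowerbound}). The approach you chose is in fact the standard one found in the literature for this identity, so had the paper included a proof it would most likely have looked essentially like yours.
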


We use the following asymptotic approximation of integrals: $\int_{\RR^d} f(x)^p \ud x = \norm{f}_\infty^{p+o(p)}$ (\cref{prop:asymptotic}). 
It follows easily from the fact that the $L^p$ norm converges to the $L^\infty$ norm as $p \to \infty$ under mild assumptions (\cref{prop:limit}).
\begin{proposition}[{\cite[p. 71]{MR924157}}]\label{prop:limit}
Let $1\leq q < \infty$. 
Let $f \in L^\infty(\RR^d) \cap L^q(\RR^d)$.
Then $\norm{f}_\infty = \lim_{p \to \infty} \norm{f}_p$.
\end{proposition}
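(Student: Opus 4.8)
The plan is a standard two-sided estimate comparing $\norm{f}_p$ with $M := \norm{f}_\infty$, using the hypothesis $f \in L^q$ only to control the tails of $f$. First I would dispose of the degenerate cases: if $M = 0$ then $f = 0$ almost everywhere and both sides vanish, and likewise if $\norm{f}_q = 0$; so from now on assume $M \in (0,\infty)$ and $\norm{f}_q \in (0,\infty)$.

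For the upper bound $\limsup_{p \to \infty} \norm{f}_p \le M$, for every $p > q$ I would use the pointwise inequality $|f|^p = |f|^{p-q}\,|f|^q \le M^{p-q}\,|f|^q$, valid almost everywhere, integrate over $\RR^d$, and take $p$-th roots to get $\norm{f}_p \le M^{1 - q/p}\,\norm{f}_q^{q/p}$; letting $p \to \infty$ yields the bound since $M^{1-q/p} \to M$ and $\norm{f}_q^{q/p} \to 1$. For the matching lower bound I would fix $\eps \in (0, M)$ and set $A_\eps := \{x \in \RR^d : |f(x)| > M - \eps\}$. By definition of the essential supremum $\vol{A_\eps} > 0$, while Markov's inequality applied to $|f|^q$ together with $f \in L^q$ gives $\vol{A_\eps} < \infty$. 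Hence $\norm{f}_p \ge (M - \eps)\,\vol{A_\eps}^{1/p}$ for all $p$, and since $0 < \vol{A_\eps} < \infty$ we have $\vol{A_\eps}^{1/p} \to 1$, so $\liminf_{p \to \infty} \norm{f}_p \ge M - \eps$. Letting $\eps \downarrow 0$ and combining with the upper bound shows that the limit exists and equals $M$.

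There is no genuine obstacle here; this is a classical fact about $L^p$ spaces. The only place the hypothesis $f \in L^q$ is used — and it is genuinely needed on the infinite-measure space $\RR^d$, as the example of a nonzero constant function shows — is in the two finiteness assertions $\vol{A_\eps} < \infty$ and $\norm{f}_q < \infty$ above. Accordingly I would either record the short argument just sketched or simply cite a standard reference such as \cite{MR924157}.
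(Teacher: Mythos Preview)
Your argument is correct and entirely standard. The paper itself gives no proof of this proposition at all---it simply cites \cite[p.~71]{MR924157}, which is precisely the alternative you already suggest at the end of your proposal.
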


\begin{proposition}\label{prop:asymptotic}
Let $1\leq q < \infty$. 
Let $f \in L^\infty(\RR^d) \cap L^q(\RR^d)$ and assume that $f$ is nonnegative and $C:= \norm{f}_\infty \neq 1$.
Then, as $p \to \infty$, $\int_{\RR^d} f(x)^p \ud x = C^{p+o(p)}$ (where $o(p)$ can depend on $f$).
\end{proposition}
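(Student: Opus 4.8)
The whole statement reduces to \cref{prop:limit} once the integral is rewritten as a power of an $L^p$ norm. First I would record that, since $f \geq 0$, we have $\int_{\RR^d} f(x)^p \ud x = \norm{f}_p^p$ for every $p \geq 1$, and that this is finite for $p \geq q$: the elementary interpolation bound $\norm{f}_p \leq \norm{f}_\infty^{1-q/p}\norm{f}_q^{q/p}$ (obtained from $\int |f|^p = \int |f|^{p-q}|f|^q \leq \norm{f}_\infty^{p-q}\norm{f}_q^q$) shows $f \in L^p(\RR^d)$ for all $p \in [q,\infty]$, so \cref{prop:limit} applies and gives $\norm{f}_p \to \norm{f}_\infty = C$ as $p \to \infty$.

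Next I would dispose of the degenerate case $C = 0$: then $f = 0$ almost everywhere, so $\int_{\RR^d} f(x)^p \ud x = 0 = C^p$ for every $p > 0$ and the claim holds with $o(p) \equiv 0$. So assume $0 < C \neq 1$. Since $\norm{f}_\infty = C > 0$ forces $\norm{f}_p > 0$, taking logarithms is legitimate and the assertion $\int f^p = C^{p+o(p)}$ is equivalent to
\[
p\bigl(\log\norm{f}_p - \log C\bigr) = o(p) \quad\text{as } p \to \infty,
\]
because $\log \int_{\RR^d} f(x)^p \ud x = p\log\norm{f}_p$. As $\norm{f}_p \to C > 0$ and $\log$ is continuous at $C$, we get $\log\norm{f}_p \to \log C$, hence $\log\norm{f}_p - \log C = o(1)$ and the displayed identity follows. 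Unwinding, $\int_{\RR^d} f(x)^p\ud x = C^{p+\eps(p)}$ with $\eps(p) := p\bigl(\log\norm{f}_p - \log C\bigr)/\log C = o(p)$, which is exactly what is claimed (and this $\eps$ depends on $f$ through $\norm{f}_p$).

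There is no genuine obstacle in this argument; the one point deserving care is the role of the hypothesis $C \neq 1$. It is used precisely when solving for $\eps(p)$: if $C = 1$ then $\log C = 0$ and one cannot divide, which is not a cosmetic issue — the conclusion genuinely fails in that case (for instance $\norm{f}_p^p$ can tend to a constant or decay polynomially, whereas $1^{p+o(p)} = 1$ for every choice of $o(p)$). So the proof should state $C \neq 1$ is invoked exactly at that division step.
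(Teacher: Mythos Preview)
Your argument is correct and is essentially the same as the paper's: both reduce the claim to \cref{prop:limit}, write $\int f^p = \norm{f}_p^p$ (equivalently $a_p^{1/p} \to C$), and then take logarithms to convert the convergence $\norm{f}_p \to C$ into the exponent statement $g(p)=o(p)$. Your version is slightly more careful---you explicitly verify $f\in L^p$ for $p\geq q$ via interpolation, dispose of the $C=0$ case, and pinpoint exactly where $C\neq 1$ is invoked---but the route is identical.
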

\begin{proof}
Let $a_p = \int_{\RR^d} f(x)^p \ud x$.
From \cref{prop:limit} we have $\lim_{p \goesto \infty} a_p^{1/p} = C$.
Write $a_p = C^{p + g(p)}$ for some function $g$.

To conclude, we will now show that $g(p) = o(p)$.
Note that $a_p^{1/p} = C^{1+\frac{g(p)}{p}}$, so that, applying $\lim_{p \to \infty}$ to both sides we get $\lim_{p \to \infty} C^{\frac{g(p)}{p}} = 1$, which implies $\lim_{p \to \infty} \frac{g(p)}{p} = 0$.
\end{proof}

We need the following known inequality (the constant has not been optimized). 
\begin{lemma}\label{lem:k}
If $X$ is a (real valued) mean zero logconcave random variable then $\e(\abs{X}) \geq \frac{1}{8} \sqrt{\e(X^2) }$.
\end{lemma}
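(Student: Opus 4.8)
The plan is to reduce the inequality to two standard facts about one‑dimensional logconcave distributions. If the law of $X$ is a point mass then, having mean zero, $X=0$ almost surely and both sides vanish; so we may assume $X$ has a logconcave (in particular continuous) density $f$. Rescaling, it suffices to show that $\e(X^2)=1$ implies $\e(\abs X)\ge 1/8$. Put $p=\pr(X>0)$ and $q=\pr(X<0)$, so $p+q=1$.

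The first fact is the reverse moment inequality $\e(Z^2)\le 2\bigl(\e Z\bigr)^2$, valid for any nonnegative random variable $Z$ with a logconcave density (the exponential distribution being extremal); this is classical. The conditional laws $Z_+:=(X\mid X>0)$ and $Z_-:=(-X\mid X<0)$ have logconcave densities on $(0,\infty)$, being scalar multiples of restrictions of $f$ to a half‑line, so the inequality applies to both. Since $\e X=0$, the quantity $a:=\e(X\ind\{X>0\})=\e(-X\ind\{X<0\})$ satisfies $p\,\e Z_+=q\,\e Z_-=a$ and $\e(\abs X)=2a$. Conditioning on the sign of $X$,
\[
1=\e(X^2)=p\,\e(Z_+^2)+q\,\e(Z_-^2)\le 2p\bigl(\e Z_+\bigr)^2+2q\bigl(\e Z_-\bigr)^2=2a^2\Bigl(\tfrac1p+\tfrac1q\Bigr)=\frac{\bigl(\e(\abs X)\bigr)^2}{2pq},
\]
so $\e(\abs X)\ge\sqrt{2pq}$.

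It remains to bound $pq$ below, and this is where the second fact enters: for a logconcave random variable with mean $0$, each of the rays $[0,\infty)$ and $(-\infty,0]$ has probability at least $1/e$ — a one‑dimensional instance of Grünbaum's inequality for logconcave measures. Hence $p,q\ge 1/e$, and with $p+q=1$ this gives $pq\ge\frac1e\bigl(1-\frac1e\bigr)>\frac1{32}$, so $\e(\abs X)\ge\sqrt{2pq}>\tfrac14>\tfrac18$. (Alternatively, the whole argument can be replaced by one invocation of Borell's reverse Hölder inequality for logconcave measures: since $X$ is centered and logconcave, $\norm X_2\le C\norm X_1$ for a universal $C$, and one checks $C\le 8$.) There is no genuine obstacle here; the content of the lemma is entirely in these two standard logconcavity estimates, and tracking the constants is routine — indeed the argument above already yields the much better bound $\e(\abs X)\ge\sqrt{(2/e)(1-1/e)}\,\sqrt{\e(X^2)}$, consistent with the remark that $1/8$ is far from optimal.
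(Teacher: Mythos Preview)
Your proof is correct, but it takes a different route from the paper's. After the same reduction to the isotropic case, the paper invokes a single black-box fact: the density of a centered isotropic logconcave random variable is bounded above by $1$ (citing \cite[Lemma~5.5]{LV07}). This gives $\pr(\abs{X}<1/4)\le 1/2$ immediately, hence $\pr(\abs{X}\ge 1/4)\ge 1/2$, and Markov's inequality yields $\e(\abs{X})\ge 1/8$ in one line.

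Your argument --- conditioning on the sign of $X$, applying the reverse moment bound $\e(Z^2)\le 2(\e Z)^2$ to each half, and then controlling $pq$ via the one-dimensional Gr\"unbaum inequality --- draws on two standard logconcavity facts rather than one, but it delivers the much stronger constant $\sqrt{(2/e)(1-1/e)}\approx 0.68$ in place of $1/8$. The paper's approach is shorter and lands exactly on the stated constant; yours is sharper and makes the non-optimality of $1/8$ explicit.
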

\lnote{can one remove ``mean zero''}
\begin{proof}
The inequality is invariant under scaling and therefore it is enough to prove it when $X$ is isotropic (i.e.\ when $\e(X^2)=1$).
It is known \cite[Lemma 5.5]{LV07} that the density of an isotropic logconcave random variable is at most 1.
Therefore, using Markov's inequality, $ 1/2 \leq \pr(\abs{X} \geq 1/4) \leq 4 \e(\abs{X})$. 
The claim follows.
\end{proof}

\section{Facets and \texorpdfstring{$k$}{k}-facets}\label{sec:k-facet}

In this section we study the expected number of $k$-facets of Gaussian random polytopes. We give an asymptotic formula for the expected number of $k$-facets in the case when the dimension $d$ goes to infinity and the number of samples $n$ grows linearly with $d$.

Before establishing our asymptotic formula, we need to establish the following result which reduces the problem of computing $\mathbb{E}f_{d-1} ([X_1, \dotsc, X_n])$ from a $d$-dimensional problem to a $1$-dimensional problem.
\begin{theorem}\label{thm:3.2}
Let $X_1,\dotsc, X_n$ be $n \ge d+1$ i.i.d.\ standard Gaussian random vectors in $\RR^d$. 
Then the expected number of $k$-facets of $\{X_1, \dotsc, X_n\}$ is equal to 
\[
\binom{n}{d}\mathbb{P}\bigl(Y \in E_k(\{Y,Y_1, \dotsc, Y_{n-d}\}) \bigr)
\]
where $Y$ is $N(0, \frac{1}{d})$, $Y_i$ is $N(0,1)$ for $i = 1, \dotsc ,n-d$ and  $Y, Y_1, \dotsc , Y_{n-d}$ are independent. 
\end{theorem}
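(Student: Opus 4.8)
The plan is to reduce from $d$ dimensions to one dimension by linearity of expectation together with an explicit description of the relevant one-dimensional quantities. First, by exchangeability of $X_1, \dots, X_n$ and linearity of expectation, the expected number of $k$-facets equals $\binom{n}{d}$ times $\pr(\{X_1, \dots, X_d\} \in E_k(\{X_1, \dots, X_n\}))$, so it is enough to analyze the event that the fixed $d$-subset $\{X_1, \dots, X_d\}$ is a $k$-facet. Almost surely these points are affinely independent, so $H := \aff\{X_1, \dots, X_d\}$ is a hyperplane; I would fix a unit normal $u$ to $H$ (by any measurable rule) and set $T := \langle u, X_1\rangle$, the signed distance from the origin to $H$, and $Z_j := \langle u, X_j\rangle$ for $j > d$. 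Then $X_j$ lies strictly on a fixed side of $H$ exactly when $Z_j > T$, and $\{X_1, \dots, X_d\}$ is a $k$-facet precisely when $\#\{j > d : Z_j > T\} \in \{k, n-d-k\}$; this event is insensitive to the sign of $u$, and it is the one-dimensional instance of the definition of a $k$-facet applied to the points $T, Z_{d+1}, \dots, Z_n$ on the line.

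The main step is then to show that $T \sim N(0, 1/d)$ and is independent of $(Z_{d+1}, \dots, Z_n)$, which are i.i.d.\ $N(0,1)$; once this is established, relabeling $T$ as $Y$ and $Z_{d+1}, \dots, Z_n$ as $Y_1, \dots, Y_{n-d}$ finishes the proof. The key observation is to write $T = \langle u, \bar X\rangle$, where $\bar X := \frac1d\sum_{i=1}^d X_i \in H$ is the centroid of the first $d$ points. Here $\bar X \sim N(0, \frac1d I_d)$, while $u$ is a measurable function of the centered points $X_i - \bar X$ alone (it spans their a.s.\ one-dimensional orthogonal complement). Since for a jointly Gaussian sample the mean $\bar X$ is independent of the centered sample, $\bar X$ is independent of $u$; hence conditionally on $u$ we get $T = \langle u, \bar X\rangle \sim N(0, \frac1d)$, which yields both $T \sim N(0, 1/d)$ and $T \perp u$. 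Finally $X_{d+1}, \dots, X_n$ are independent of the first $d$ points, hence of $(u, T)$, and conditioning on $(u, T)$ leaves them i.i.d.\ $N(0, I_d)$, so the $Z_j$ are i.i.d.\ $N(0,1)$ and independent of $T$.

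I do not expect a serious obstacle here: the argument is elementary, and the only genuinely substantive point is the identification $T \sim N(0, 1/d)$, which comes for free from the independence of the Gaussian sample mean and the centered sample and is what replaces the Blaschke--Petkantschin computation used in \cite{HMR}. The remaining care is bookkeeping: verifying that general position holds with probability one so that $H$, its normal, and the one-dimensional configuration $\{Y, Y_1, \dots, Y_{n-d}\}$ are a.s.\ well defined, and checking that the translation of ``the open halfspace on one side of $\aff\Delta$ contains exactly $k$ points'' into the condition on $\#\{j : Z_j > T\}$ is consistent and symmetric under $k \leftrightarrow n - d - k$, as the statement requires.
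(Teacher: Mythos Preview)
Your argument is correct and takes a genuinely different route from the paper's. Both proofs start with the same reduction by symmetry to the event that $\{X_1,\dotsc,X_d\}$ is a $k$-facet and then project onto the normal direction, but the identifications of the law of the signed distance and of the required independences are handled very differently.

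In the paper, the unit normal $V$ is given an independent random sign, and the distribution of $Y=V\cdot X_1$ is computed by linear algebra: $Y^2$ is the squared distance from the origin to $\aff\{X_1,\dotsc,X_d\}$, expressed as $1/\norms{A^{-1}\ones}$, and orthogonal invariance of $A$ reduces this to $\tfrac{1}{d}$ times the squared distance from $X_1$ to $\linspan\{X_2,\dotsc,X_d\}$, which is $\chi^2_1$. Independence of $Y$ from $(Y_1,\dotsc,Y_{n-d})$ is then obtained by a symmetry argument showing that the conditional law of $V$ given $Y$ is still uniform on the sphere. Your route avoids both of these steps: you write the signed distance as $T=\langle u,\bar X\rangle$ with $\bar X$ the sample mean of $X_1,\dotsc,X_d$, observe that $u$ is a function of the centered sample while $\bar X\sim N(0,\tfrac1d I_d)$ is independent of it, and read off $T\sim N(0,\tfrac1d)$ together with $T\perp u$ in one stroke. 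The paper's computation is more hands-on and makes the connection to the distance-to-subspace picture explicit; your centroid argument is shorter and uses only the classical independence of the Gaussian sample mean from the centered sample. One small point of care: your phrase ``by any measurable rule'' for the sign of $u$ should be read as ``by any measurable rule depending only on the centered points $X_i-\bar X$'' (as your parenthetical indicates), since a sign choice that looked at $\bar X$ (e.g.\ forcing $T>0$) would break the independence you need.
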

\begin{proof}
By linearity of expectation and symmetry, it is enough to show that the probability that $\{X_1, \dotsc, X_d\}$ is a $k$-facet is $\pr \bigl(Y \in E_k(\{Y,Y_1, \dotsc, Y_{n-d}\}) \bigr) $.

Let $V$ be a random unit vector perpendicular to $\aff\{ X_1, \dotsc, X_d\}$ but with its orientation (sign) chosen independently at random among the two choices.
Define $Y = V\cdot X_1$ and $Y_i = V \cdot X_{i+d}$, $i=1,\dotsc, n-d$.
Using that $V$ is independent of $X_{d+1},\dotsc, X_{n}$, it is clear that the $Y_i$s are i.i.d.\ $N(0,1)$.
Moreover, notice that by symmetry, the distribution of $V$ conditioned on $Y$ is still uniform on the unit sphere.
That is, $V$ is independent of $Y$, which implies that $Y$ is independent of $Y_1, \dotsc, Y_{n-d}$.

We now determine the distribution of $Y$. 
Note that $Y^2$ is the squared distance of $\aff\{ X_1, \dotsc, X_d\}$ to the origin, which is given by $1/\norms{A^{-1} \ones}$, where $A$ is the matrix having $X_1, \dotsc, X_d$ as rows. 
By the invariance under orthogonal transformations of the distribution of $A$, the distribution of $A^{-1}$ is also invariant under orthogonal transformations and the distribution of $1/\norms{A^{-1} \ones}$ is the same as the distribution of $\frac{1}{d\norms{A^{-1} e_1}}$, where $\frac{1}{\norms{A^{-1} e_1}}$ is the squared distance between $X_1$ and $\linspan \{X_2, \dotsc, X_d\}$. 
This is distributed as $\chi^2_1$ (namely, $N(0,1)$ squared). 
Thus, using the random sign of $V$, the distribution of $Y$ is $N(0,1/d)$.

In summary, $Y$ and $Y_i$s are distributed as in the statement.
Moreover, the event that $\{X_1, \dotsc, X_d\}$ is a $k$-facet of $\{X_1, \dotsc, X_n\}$ is the same as the event that $Y$ is a $k$-facet of $ \{Y,Y_1, \dotsc, Y_{n-d}\} $.
\end{proof}

We remark that \cref{thm:3.2} is heavily inspired by the work of Hug, Munsonius and Reitzner in \cite{HMR}.
In particular, \cref{thm:3.2} is a simple generalization of \cite[Theorem 3.2]{HMR} from facets to $k$-facets. See \cite[Theorem 3.2]{HMR} for an alternative proof of the above theorem (in the case of facets) using the affine Blaschke-Petkantschin formula. 

We are know ready to state our main result on facets/$k$-facets of Gaussian random polytopes. We use the notation 
\[
\Phi(y) := \frac{1}{\sqrt{2 \pi}} \int_{-\infty}^y e^{-s^2/2} \ud s , \text{  and } \phi(y):=\Phi'(y) = \frac{1}{\sqrt{2\pi}}e^{-y^2/2} 
\]
for the CDF and PDF of the standard Gaussian distribution.

\begin{theorem}\label{thm:k-facet}
Fix $\alpha>1$, and $r \in [0,1]$ and assume that $n/d \to \alpha$ as $d \to \infty$ and that $k/(n-d) \to r$ as $d \to \infty$. Let $X$ be a set of $n$ i.i.d.\ Gaussian random points in $\RR^d$. Then the expected number of $k$-facets of $X$ is equal to 
\[
\left(2^{\alpha H(\frac{1}{\alpha})} 2^{(\alpha-1) H(r)} \sqrt{2\pi} c_{\alpha,r}\right)^{d+o(d)} \text{ as } d \to \infty,
\]
where 
\[
c_{\alpha,r} := \max_{y \in \RR} \{ \Phi(y)^{r\alpha} (1-\Phi(y))^{\alpha-1-r\alpha} \phi(y)\}.
\]
and $H(r)$ is the binary entropy function. The rate of convergence in the above $o(d)$ is not universal as it depends on $\alpha$ and $r$ and on the rate of convergence of $n/d$ to $\alpha$ and $k/(n-d)$ to $r$.
\end{theorem}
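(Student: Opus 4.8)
The plan is to start from the exact formula provided by \cref{thm:3.2}, which reduces the expected number of $k$-facets to
\[
\binom{n}{d}\,\pr\bigl(Y \in E_k(\{Y,Y_1,\dotsc,Y_{n-d}\})\bigr),
\]
with $Y \sim N(0,1/d)$ and $Y_1,\dotsc,Y_{n-d} \sim N(0,1)$ all independent. First I would compute the one-dimensional probability explicitly. Conditioning on the value $Y=y$, the "hyperplane" $\aff\{Y\}$ is just the point $y$ itself, and being a $k$-facet means exactly $k$ of the $n-d$ points $Y_i$ lie on one particular side of $y$ and $n-d-k$ on the other; accounting for the random sign built into $Y$, one gets a factor $\binom{n-d}{k}$ times $\Phi(y)^k(1-\Phi(y))^{n-d-k}$ summed over the two orientations (which are symmetric after integrating over $y$). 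Integrating against the density of $Y$, which is $\sqrt{d/(2\pi)}\,e^{-dy^2/2}$, and substituting $y \mapsto y/\sqrt d$ is the wrong move; instead I would substitute so the Gaussian weight becomes $\phi$ raised to a power: writing the density of $Y$ as $\sqrt{d}\,\phi(\sqrt d\, y)$ and changing variables $t=\sqrt d\, y$ gives
\[
\pr\bigl(Y\in E_k\bigr) = \binom{n-d}{k}\int_{\RR} \Phi(t/\sqrt d)^k\bigl(1-\Phi(t/\sqrt d)\bigr)^{n-d-k}\phi(t)\,\ud t
\]
up to the orientation factor — but this still has the awkward $t/\sqrt d$ inside $\Phi$. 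The cleaner route is to \emph{not} rescale: keep $Y$ with density $\sqrt{d}\,\phi(\sqrt d\,y)$, note $\Phi$ is evaluated at the unscaled argument $y$ which ranges over all of $\RR$, and observe that as $d\to\infty$ with $k/(n-d)\to r$ and $n-d \sim (\alpha-1)d$, the integrand $\Phi(y)^k(1-\Phi(y))^{n-d-k}\phi(\sqrt d y)^{?}$ — I need to be careful here — in fact the right normalization is to pull out $\sqrt d$ and write the integral as $\int \bigl(\Phi(y)^{r\alpha}(1-\Phi(y))^{(\alpha-1)(1-r)}\phi(y)\bigr)^{d}$ times lower-order corrections, after matching $\phi(\sqrt d y) = \phi(y)^{d}\cdot(\sqrt{2\pi})^{d-1}$ up to $e^{o(d)}$. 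This is precisely the setup for \cref{prop:asymptotic}.

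The key steps, in order: (1) from \cref{thm:3.2}, expand $\pr(Y\in E_k)$ as $\binom{n-d}{k}$ times an explicit one-dimensional integral $\int_\RR \Phi(y)^k(1-\Phi(y))^{n-d-k}\,\sqrt d\,\phi(\sqrt d y)\,\ud y$; (2) use the identity $\phi(\sqrt d y) = (2\pi)^{(1-d)/2}\phi(y)^d$ so the integrand becomes, up to an explicit $e^{o(d)}$ prefactor, the $d$-th power of $g_{\alpha,r}(y) := \Phi(y)^{r\alpha}(1-\Phi(y))^{\alpha-1-r\alpha}\phi(y)$ — where I absorb the mismatch between the true exponents $k,\,n-d-k$ and their limiting values $r\alpha d,\,(\alpha-1-r\alpha)d$ into the $o(d)$ using uniform continuity of $\log\Phi$ and $\log(1-\Phi)$ on the region carrying the mass; (3) apply \cref{prop:asymptotic} with $f = g_{\alpha,r}$ and $p=d$ to get $\int_\RR g_{\alpha,r}(y)^d\,\ud y = c_{\alpha,r}^{\,d+o(d)}$, where $c_{\alpha,r} = \norm{g_{\alpha,r}}_\infty$ is exactly the stated maximum (checking the hypotheses of \cref{prop:asymptotic}: $g_{\alpha,r}$ is nonnegative, bounded since $\phi$ is bounded and $\Phi,1-\Phi\le 1$, and in $L^q$ since it decays like a Gaussian times a bounded factor, at least when $\alpha-1-r\alpha\ge 0$; the boundary cases $r=0$ or $r=1$ and the possibility $c_{\alpha,r}=1$ need a separate sentence); (4) handle the binomial coefficients via Stirling: $\binom{n}{d} = 2^{\alpha H(1/\alpha)d + o(d)}$ and $\binom{n-d}{k} = 2^{(\alpha-1)H(r)d+o(d)}$; (5) collect the $\sqrt{2\pi}$ from step (2) (the $(2\pi)^{(1-d)/2}$ contributes $(\sqrt{2\pi})^{-d}$, which combines with the $\sqrt d$ and the absolute normalization... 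I should double-check the bookkeeping so the final base is exactly $2^{\alpha H(1/\alpha)}2^{(\alpha-1)H(r)}\sqrt{2\pi}\,c_{\alpha,r}$), and multiply everything together, observing that all polynomial-in-$d$ and other sub-exponential factors are swallowed by the $d+o(d)$ exponent.

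The main obstacle I anticipate is step (2): justifying rigorously that replacing the integer exponents $k$ and $n-d-k$ by their asymptotic linear-in-$d$ surrogates $r\alpha d$ and $(\alpha-1-r\alpha)d$ only costs a factor $e^{o(d)}$, \emph{uniformly in $y$ over the relevant range}. The subtlety is that $\log\Phi(y) \to -\infty$ as $y\to-\infty$ and $\log(1-\Phi(y))\to-\infty$ as $y\to+\infty$, so a naive bound $|k - r\alpha d|\cdot|\log\Phi(y)|$ is not $o(d)$ pointwise. The fix is to split the integral into a compact region containing the maximizer $y^\ast$ of $g_{\alpha,r}$ — where $\log\Phi$ and $\log(1-\Phi)$ are bounded, the error is genuinely $o(d)$, and the integral is already $c_{\alpha,r}^{d+o(d)}$ — and the two tails, which one bounds crudely (e.g. by $\Phi(y)^k$ near $-\infty$ and $\phi(\sqrt d y)$-type decay, or by monotonicity comparisons) to show they contribute at most $c_{\alpha,r}^{d+o(d)}$ and hence don't affect the exponential rate. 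A secondary, minor obstacle is confirming that the maximum defining $c_{\alpha,r}$ is attained and is the genuine $L^\infty$ norm (continuity of $g_{\alpha,r}$ plus its decay at $\pm\infty$ handle this when the exponents are positive; the cases $r\in\{0,1\}$, where one factor becomes $\Phi(y)^0\equiv 1$ or the exponent $\alpha-1-r\alpha$ could be zero, are handled by inspection — e.g. for $r=1$ the relevant factor is $\Phi(y)^{\alpha}\phi(y)$ and $c_{\alpha,1}$ is its sup, finite and $<1$).
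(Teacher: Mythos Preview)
Your proposal is correct and follows essentially the same route as the paper: start from \cref{thm:3.2}, rewrite the one-dimensional integral so that the integrand is (up to $e^{o(d)}$ factors) a $d$th power via $e^{-dy^2/2} = (2\pi)^{d/2}\phi(y)^d$, handle the replacement of the integer exponents $k,\,n-d-k$ by their asymptotic values by first truncating to a compact interval containing the maximizer (exactly the fix you propose for your ``main obstacle''; on that interval $\Phi$ and $1-\Phi$ are $\Theta(1)$ so the error is $e^{o(d)}$), apply \cref{prop:asymptotic}, and finish with Stirling on the binomials. One bookkeeping slip to fix: $\phi(\sqrt d\,y) = (2\pi)^{(d-1)/2}\phi(y)^d$, not $(2\pi)^{(1-d)/2}$, which is why the base acquires the factor $\sqrt{2\pi}$ rather than its reciprocal.
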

\begin{proof}
From \cref{thm:3.2}, $\mathbb{E} e_k(X) = \binom{n}{d}\mathbb{P}\big(Y \in E_k(\{Y,Y_1, \dotsc, Y_{n-d}\}) \big)$ where $Y$ is $N(0, \frac{1}{d})$, $Y_i$ is $N(0,1)$ for $i = 1, \dotsc ,n-d$ and  $Y, Y_1, \dotsc , Y_{n-d}$ are independent. Notice that if $k\neq\frac{n-d}{2}$, then 
\[
\mathbb{P}\big(Y \in E_k(\{Y,Y_1, \dotsc, Y_{n-d}\}) \big) = 2\binom{n-d}{k}\frac{\sqrt{d}}{\sqrt{2\pi}} \int\limits_{-\infty}^{\infty} \Phi(y)^k \bigl(1-\Phi(y)\bigr)^{n-d-k} e^{\frac{-dy^2}{2}} \ud y.
\]
If $k = \frac{n-d}{2}$, the above formula counts each potential $k$-facet twice, because in this case each side of the hyperplane represented by $Y$ could contain exactly $\frac{n-d}{2}$ points. Therefore, if $k = \frac{n-d}{2}$, the above formula holds after removing the factor of two on the right-hand side. This factor of two is not important for our result, and we have that
\begin{align*}
   \mathbb{E} e_k(X)
    &= \Theta(1)\binom{n}{d}\mathbb{P}\big(Y \in E_k(\{Y,Y_1, \dotsc, Y_{n-d}\}) \big) \\
    &= \Theta(1)\binom{n}{d}\binom{n-d}{k}\frac{\sqrt{d}}{\sqrt{2\pi}} \int\limits_{-\infty}^{\infty} \Phi(y)^k \bigl(1-\Phi(y)\bigr)^{n-d-k} e^{-dy^2/2} \ud y \\
    &= \Theta(1)\binom{n}{d}\binom{n-d}{k}\sqrt{d}(2\pi)^{\frac{d-1}{2}} \int\limits_{-\infty}^{\infty} \Phi(y)^k \bigl(1-\Phi(y)\bigr)^{n-d-k} \phi(y)^d \ud y.
\end{align*}
We will use \cref{prop:asymptotic} to estimate the integral in the above expression. 
In particular, we will show that the integral is equal to $c_{\alpha,r}^{d+o(d)}$ where $c_{\alpha,r}:=\|f\|_\infty$ and $f(y) := \Phi(y)^{r(\alpha-1)} \bigl(1-\Phi(y)\bigr)^{(1-r)(\alpha-1)} \phi(y)$. 
In order to establish this estimate, we first need to restrict the integral to some finite interval, the length of which does not depend on $d$ but does depend on $\alpha,r$. 
In order to accomplish this, first observe that we can upper bound the terms in front of the integral by $\binom{n}{d}\binom{n-d}{k}\sqrt{d}(2\pi)^{\frac{d-1}{2}} = O\bigl(2^n2^n(2\pi)^{\frac{d-1}{2}}\bigr)=O\bigl((4^{\alpha}\sqrt{2\pi})^d \bigr)$. 
Now choose $R(\alpha)$ so that $\phi\bigl(R(\alpha)\bigr)<\frac{1}{4^\alpha\sqrt{2\pi}}$. 
For technical reasons, we also need to assume that our region of integration is big enough so that it contains some $y_0 \in \RR$ so that $c_{\alpha,r} =  f(y_0)$\details{$y_0$ is not at $\infty$ because $\lim_{y \to \pm\infty}f(y) = 0$}. 
So choose $R(\alpha,r)$ so that $R(\alpha,r)\ge R(\alpha)$ and so that $[-R(\alpha,r),R(\alpha,r)]$ contains some $y_0$ as above. 
Using the fact that $\Phi(y)^k \bigl(1-\Phi(y)\bigr)^{n-d-k}<1$ and $\phi\bigl(R(\alpha,r)\bigr)<\frac{1}{4^\alpha\sqrt{2\pi}}$, we know that the right tail of the integral is upper bounded by
\begin{align*}
\int\limits_{R(\alpha,r)}^{\infty} \Phi(y)^k \bigl(1-\Phi(y)\bigr)^{n-d-k} \phi(y)^d \ud y 
&\le \int\limits_{R(\alpha,r)}^{\infty}   \phi(y)^{d-1} \phi(y) \ud y\\
& \le \bigg(\frac{1}{4^\alpha \sqrt{2\pi}}\bigg)^{d-1}\int\limits_{R(\alpha,r)}^{\infty}    \phi(y) \ud y \\
&=O((4^\alpha \sqrt{2\pi})^{-d})
\end{align*}
and the same estimate holds for the left tail. Therefore,
\begin{align*}
   \mathbb{E} e_k(X) &= \Theta(1)\binom{n}{d}\binom{n-d}{k}\sqrt{d}(2\pi)^{\frac{d-1}{2}} \int\limits_{-\infty}^{\infty} \Phi(y)^k \bigl(1-\Phi(y)\bigr)^{n-d-k} \phi(y)^d \ud y\\
   &= \Theta(1)\binom{n}{d}\binom{n-d}{k}\sqrt{d}(2\pi)^{\frac{d-1}{2}} \int\limits_{-R(\alpha,r)}^{R(\alpha,r)} \Phi(y)^k \bigl(1-\Phi(y)\bigr)^{n-d-k} \phi(y)^d \ud y +O(1) \\
   & = \Theta(1)\binom{n}{d}\binom{n-d}{k}\sqrt{d}(2\pi)^{\frac{d-1}{2}} \int\limits_{-R(\alpha,r)}^{R(\alpha,r)} \Phi(y)^k \bigl(1-\Phi(y)\bigr)^{n-d-k} \phi(y)^d \ud y 
\end{align*}
where the last equality uses the fact that $\mathbb{E}e_k(X)\ge1$ so that the $O(1)$ term can be absorbed into the $\Theta(1)$ factor in front.

Now for $y \in [-R(\alpha,r),R(\alpha,r)]$, $\Phi(y)$ and $1-\Phi(y)$ both take values in some fixed interval, i.e.\ $\Phi(y) = \Theta(1)$ and $1-\Phi(y) = \Theta(1)$. 
Recall that we are assuming that $n/d \to \alpha $ and $k/(n-d)\to r$ as $d \to \infty$ which means that $n = \alpha d +o(d)$ and $k = r(\alpha-1)d+o(d)$ and therefore that $n-d-k = (\alpha-1 )d-r(\alpha-1)d+o(d)$. 
This means that $\Phi(y)^k =\Phi(y)^{r(\alpha-1)d}\Theta(1)^{o(d)} =e^{o(d)} \Phi(y)^{r(\alpha-1)d}$ and that $\bigl(1-\Phi(y)\bigr)^{n-d-k} = \bigl(1-\Phi(y)\bigr)^{(\alpha-1 )d-r(\alpha-1)d}\Theta(1)^{o(d)} = e^{o(d)}\bigl(1-\Phi(y)\bigr)^{(\alpha-1 )d-r(\alpha-1)d}$ for $y \in [-R(\alpha,r),R(\alpha,r)]$. 
Therefore we have shown that 
\begin{align*}
    \int\limits_{-R(\alpha,r)}^{R(\alpha,r)} &\Phi(y)^k \bigl(1-\Phi(y)\bigr)^{n-d-k} \phi(y)^d \ud y\\
    &= e^{o(d)}\int\limits_{-R(\alpha,r)}^{R(\alpha,r)}\Phi(y)^{r(\alpha-1)d}\bigl(1-\Phi(y)\bigr)^{(1-r)(\alpha-1 )d} \phi(y)^d \ud y.
\end{align*}
Let $\hat{f}:=f\cdot\ind \{ -R(\alpha,r)<y<R(\alpha,r)\} $. 
Define $\hat{c}_{\alpha,r} : = \|\hat{f} \|_{\infty}$. 
Recall that we are assuming that $f$ attains its maximum somewhere in the interval $[-R(\alpha,r),R(\alpha,r)]$ so we have that $\hat{c}_{\alpha,r} = c_{\alpha,r}$.

By \cref{prop:asymptotic}, we have that 
\[
\int\limits_{-R(\alpha,r)}^{R(\alpha,r)}\Phi(y)^{r(\alpha-1)d}\bigl(1-\Phi(y)\bigr)^{(1-r)(\alpha-1 )d} \phi(y)^d \ud y = (\hat{c}_{\alpha,r})^{d+o(d)}=(c_{\alpha,r})^{d+o(d)}.
\]
Combining everything,
\begin{align*}
   \mathbb{E} e_k(X)
    &=\Theta(1)\binom{n}{d}\binom{n-d}{k}\sqrt{d}(2\pi)^{\frac{d-1}{2}} \int\limits_{-R(\alpha,r)}^{R(\alpha,r)} \Phi(y)^k \bigl(1-\Phi(y)\bigr)^{n-d-k} \phi(y)^d \ud y \\
    & = \Theta(1)\binom{n}{d}\binom{n-d}{k}\sqrt{d}(2\pi)^{\frac{d-1}{2}}e^{o(d)}(c_{\alpha,r})^{d+o(d)} \\
    &= \left(2^{\alpha H(\frac{1}{\alpha})} 2^{(\alpha-1) H(r)} \sqrt{2\pi} c_{\alpha,r}\right)^{d+o(d)}.
\end{align*}
For the last step one can use the Stirling approximation of the Gamma function and the fact that Gamma is continuous on $\RR_+$ to obtain the asymptotic estimates of the binomial coefficients. \details{Using Stirling approx. of Gamma and the fact that Gamma is continuous on $\RR_+$,
\begin{align*}
    \binom{n-d}{k}  &= \frac{\Gamma(n-d+1)}{\Gamma(k+1)\Gamma(n-d-k+1)} \\
    & \sim \frac{\Gamma(d(\alpha-1)+1)}{\Gamma(dr(\alpha-1)+1)\Gamma(d(1-r)(\alpha-1)+1)}\\
    &\sim \frac{\sqrt{d(\alpha-1)}}{\sqrt{dr(\alpha-1)}\sqrt{d(\alpha-1)(1-r)}\sqrt{2\pi}}\frac{(d(\alpha-1))^{d(\alpha-1)}}{(dr(\alpha-1))^{dr(\alpha-1)}(d(\alpha-1)(1-r))^{d(\alpha-1)(1-r)}} \\
    & =\frac{1}{\sqrt{r}\sqrt{2\pi d(\alpha-1)(1-r)}} \frac{1}{r^{dr(\alpha-1)}(1-r)^{d(\alpha-1)(1-r))}}
\end{align*}
Can ignore the first term (it's $2^{o(d)}$). Taking $log_2$ of second, get
\begin{align*}
    -dr(\alpha-1)\log_2r - d(\alpha-1)(1-r)\log_2 (1-r) 
    &= d(\alpha-1)H(r)\\
\end{align*}
So $\binom{n-d}{k} = 2^{o(d)}2^{d(\alpha-1)H(r)}$
}
\end{proof}


\section{Estranged facets}\label{sec:estranged}

We say that two facets of a polytope are \emph{estranged} if they do not share any vertices.
The main result of this section is \cref{thm:estranged}, which gives an asymptotic estimate of the expected number of estranged facets of the convex hull of $2d$ Gaussian random points in $\RR^d$.
\begin{theorem}\label{thm:estranged}
Let $X$ be a set of $2d$ i.i.d.\ Gaussian random points in $\RR^d$.
Let $N$ be the number of (unordered) pairs of estranged facets in $[X]$.
Then
\[
\e(N) = (4C_{\ref*{lem:pair}})^{d+o(d)},
\]
where $C_{\ref*{lem:pair}} \in (0, 1/2)$ is the universal constant from \cref{lem:pair}.
\end{theorem}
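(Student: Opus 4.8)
\begin{proofidea}
\textbf{Reduction.} Since $[X]$ is simplicial almost surely, each facet is spanned by exactly $d$ of the $2d$ points, so a pair of estranged facets is the same thing as a partition $X = \Delta_1 \sqcup \Delta_2$ into two $d$-subsets with both $\Delta_1$ and $\Delta_2$ facets of $[X]$. There are $\tfrac12\binom{2d}{d}$ such partitions, so by linearity of expectation and symmetry $\e(N) = \tfrac12\binom{2d}{d}\,p_d$ where $p_d := \pr\bigl(\{X_1,\dotsc,X_d\}\text{ and }\{X_{d+1},\dotsc,X_{2d}\}\text{ are both facets of }[X]\bigr)$. By Stirling, $\tfrac12\binom{2d}{d} = 4^{d+o(d)}$, so everything reduces to showing $p_d = C_{\ref*{lem:pair}}^{\,d+o(d)}$, which is the content of \cref{lem:pair}; the rest of this plan sketches that proof.

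\textbf{Double Blaschke--Petkantschin.} The event defining $p_d$ says all of $\Delta_2$ lies strictly on one side of $H_1 := \aff\Delta_1$ and all of $\Delta_1$ strictly on one side of $H_2 := \aff\Delta_2$. Write the joint Gaussian density of $(X_1,\dotsc,X_{2d})$ and apply the affine Blaschke--Petkantschin formula once to $(X_1,\dotsc,X_d)$ (replacing them by the hyperplane $H_1$, parametrized by a unit normal $u_1$ and offset $t_1\ge0$, together with their in-hyperplane positions; the codimension being $1$, this produces the Jacobian factor $\operatorname{vol}_{d-1}[X_1,\dotsc,X_d]$) and once to $(X_{d+1},\dotsc,X_{2d})$, producing $H_2=(u_2,t_2)$. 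On a hyperplane $\{\langle z,u\rangle=t\}$ the standard Gaussian density of a point factors as $(2\pi)^{-1/2}e^{-t^2/2}$ times the standard $(d-1)$-dimensional Gaussian density of its in-hyperplane coordinate, and a short computation shows that ``all of $\Delta_1$ on one side of $H_2$'' becomes, in the coordinates of $H_1 \cong \RR^{d-1}$, the condition that all $d$ points lie on one side of a hyperplane at signed position $a_{12} := (t_2-t_1\cos\theta)/\sin\theta$, where $\theta := \angle(u_1,u_2)$; symmetrically for $\Delta_2$ with $a_{21} := (t_1-t_2\cos\theta)/\sin\theta$. The two inner integrals then decouple, and integrating out the sphere directions (keeping only $\theta$) gives
\[
p_d = \gamma_d\int_0^\infty\!\!\int_0^\infty\!\!\int_0^\pi e^{-d(t_1^2+t_2^2)/2}\,(\sin\theta)^{d-2}\,J_d(a_{12})\,J_d(a_{21})\,\ud\theta\,\ud t_1\,\ud t_2,
\]
where $\gamma_d$ collects the Blaschke--Petkantschin constant, the sphere-surface factors and a power of $2\pi$ (its $d$-th root converges) and
\[
J_d(a) := \e\bigl[\operatorname{vol}_{d-1}[Y_1,\dotsc,Y_d]\,\ind\{\text{all }\langle Y_i,e_1\rangle\text{ lie on one side of }a\}\bigr],\quad Y_1,\dotsc,Y_d\ \text{i.i.d.\ }N(0,I_{d-1}).
\]

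\textbf{Estimating $J_d(a)$.} The bound $J_d(a)\le\e\operatorname{vol}_{d-1}[Y_1,\dotsc,Y_d]=(\sqrt{e/d}\,)^{d+o(d)}$ is immediate from \cref{prop:gaussiansimplex}. For a matching lower bound, split the indicator by which side the points are on; conditioning on ``all $\langle Y_i,e_1\rangle<a$'' makes the $Y_i$ i.i.d.\ from a logconcave Gaussian truncated to a halfspace, with diagonal covariance $\operatorname{diag}(w(a),1,\dotsc,1)$, so \cref{prop:blaschke} gives the exact conditional second moment $\e[\operatorname{vol}_{d-1}^2\mid\cdot]=\tfrac{d}{(d-1)!}w(a)$, while the event has probability $\Phi(a)^d$. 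It remains to pass from the second moment to the first with only subexponential loss, i.e.\ to show that the halfspace-truncated Gaussian simplex volume concentrates: writing $\operatorname{vol}_{d-1}[Y_1,\dotsc,Y_d]=\tfrac1{(d-1)!}\prod_j H_j$ as a product of Gram--Schmidt heights, each a (conditionally) logconcave real random variable, one applies \cref{lem:k} together with a concentration estimate for $\e\log(\cdot)$ to get $\e[\operatorname{vol}_{d-1}\mid\cdot]=(\sqrt{e/d}\,)^{d+o(d)}$. Hence, uniformly for $a$ in compact sets, $J_d(a)=\bigl(\sqrt{e/d}\cdot\max(\Phi(a),1-\Phi(a))\bigr)^{d+o(d)}$.

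\textbf{Assembly and main obstacle.} Substituting this into the integral for $p_d$, using $(\sin\theta)^{d-2}=(\sin\theta)^{d+o(d)}$ and first truncating the $(t_1,t_2,\theta)$-integral to a compact region (as in the proof of \cref{thm:k-facet}, since the integrand decays), \cref{prop:asymptotic} gives $p_d = C_{\ref*{lem:pair}}^{\,d+o(d)}$, where $C_{\ref*{lem:pair}}$ is the maximum over $t_1,t_2\ge0$, $\theta\in(0,\pi)$ of the explicit elementary quantity $e^{-(t_1^2+t_2^2)/2}\sin\theta\,\max(\Phi(a_{12}),1-\Phi(a_{12}))\,\max(\Phi(a_{21}),1-\Phi(a_{21}))$; positivity of $C_{\ref*{lem:pair}}$ is obvious, $C_{\ref*{lem:pair}}<1/2$ follows from this optimization, and numerics give $4C_{\ref*{lem:pair}}\approx1.7696$. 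Combining with the reduction, $\e(N)=\tfrac12\binom{2d}{d}p_d=(4C_{\ref*{lem:pair}})^{d+o(d)}$. The delicate step, and the technical heart of the argument, is the lower bound on $J_d(a)$: one must show the volume of a \emph{halfspace-truncated} Gaussian simplex is concentrated enough that its $L^1$ and $L^2$ norms agree to subexponential order, so that the clean second-moment identity from \cref{prop:blaschke} transfers to the first moment appearing in the Blaschke--Petkantschin expansion; this is where the Gram--Schmidt decomposition and \cref{lem:k} enter (and handling the non-centeredness of the heights is the main nuisance).
\end{proofidea}
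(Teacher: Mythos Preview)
Your overall strategy---reduce by linearity and symmetry to a single partition, apply the affine Blaschke--Petkantschin formula twice, isolate each inner integral as a quantity $J_d(a)$, and then invoke \cref{prop:asymptotic}---is exactly the paper's route through \cref{lem:pair}. Two points are worth flagging.

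\textbf{Upper bound on $J_d(a)$.} The only upper bound you state, $J_d(a)\le\e\operatorname{vol}_{d-1}[Y_1,\dotsc,Y_d]$, drops the indicator and therefore loses the $\Phi(a)^d$ (or $(1-\Phi(a))^d$) factor that your claimed two-sided estimate $J_d(a)=(\sqrt{e/d}\cdot\max(\Phi(a),1-\Phi(a)))^{d+o(d)}$ requires; without that factor you cannot recover the sharp constant $C_{\ref*{lem:pair}}$. The fix is already implicit in your setup: split over the two sides, factor out the probability $\Phi(a)^d$ (resp.\ $(1-\Phi(a))^d$), and bound the \emph{conditional} first moment of the volume by its conditional second moment via Cauchy--Schwarz and \cref{prop:blaschke}. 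Since truncation to a halfspace does not increase variance in any direction (only one coordinate is affected, and a truncated Gaussian has variance at most $1$; or use Brascamp--Lieb), the conditional covariance has determinant at most $1$ and the conditional second moment is at most $d/(d-1)!$. This is precisely what the paper does.

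\textbf{Lower bound on $J_d(a)$.} Here you are working much harder than necessary. The paper's key observation (\cref{lem:lowerbound}) is that the halfspace constraint is \emph{one-dimensional}: it touches only the coordinate along the halfspace normal. Writing $(d-1)!\operatorname{vol}_{d-1}[Y_1,\dotsc,Y_d]=\lvert\det W\rvert$ with $W$ the $d\times d$ matrix whose first $d-1$ rows are the coordinate vectors of the $Y_i$'s and whose last row is all ones, only the \emph{first row} $W_1$ is affected by the truncation (each entry is a one-dimensional Gaussian conditioned to lie below $a$); rows $2,\dotsc,d-1$ remain i.i.d.\ standard Gaussian. So it suffices to compare $W_1$ with an untruncated Gaussian row $W_1'$: expand $\lvert\det W\rvert=\dist(W_1,\linspan W_{2\ldots d})\cdot(\text{rest})$, note that conditioned on $W_{2\ldots d}$ the scalar $V^TW_1$ (with $V$ a unit normal to $\linspan W_{2\ldots d}$, hence orthogonal to the all-ones row) is genuinely mean-zero logconcave, and apply \cref{lem:k} \emph{once} to get $\e\lvert\det W\rvert\ge c'\,\e\lvert\det W'\rvert$ with $W'$ now the matrix of an untruncated Gaussian simplex. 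This replaces your proposed full product-of-heights concentration argument with a single constant-factor comparison---no $\e\log$, no non-centeredness nuisance, and uniformity in $a$ comes for free. Your Gram--Schmidt route might be completed, but it is the hard way around what is really a one-row problem.
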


Our proof uses the affine Blaschke-Petkantschin formula \cite[Theorem 7.2.7]{schneider}, a change of variable formula that involves the volume of a random simplex.
We will need the following estimate of the volume of a random simplex in a halfspace:
\begin{lemma}\label{lem:lowerbound}
Let $H \subseteq \RR^{d-1}$ be a halfspace that contains the origin.
Let $Z \in \RR^{(d-1) \times d}$ be a random matrix with i.i.d.\ standard Gaussian entries truncated to be in $H^d$.
Then
\[
\e\bigl( \vol{Z} \bigr) 
\geq \sqrt{1-\frac{2}{\pi}} \frac{\sqrt{d}}{2^{\frac{d+5}{2}}\Gamma(\frac{d+1}{2})} 
= \left(\frac{e}{d}\right)^{d/2} 2^{o(d)}
\]
(where $o(d)$ does not depend on $H$ and using abbreviated notation $\vol{Z} = \vol[\big]{[Z_1, \dotsc, Z_d]}$).
\end{lemma}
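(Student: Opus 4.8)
The plan is to reduce the $(d-1)$-dimensional quantity $\e\bigl(\vol{Z}\bigr)$ to a one-dimensional estimate via a cofactor expansion, and then to apply \cref{lem:k} and \cref{prop:gaussiansimplex}. By rotation invariance of the standard Gaussian and since $0\in H$, I may assume $H=\{x\in\RR^{d-1}:x_1\ge-t\}$ for some $t\ge0$; writing each column as $Z_i=(s_i,V_i)$, the first coordinates $s_1,\dots,s_d$ are then i.i.d.\ copies of a standard Gaussian conditioned to $[-t,\infty)$, the vectors $V_1,\dots,V_d$ are i.i.d.\ $N(0,I_{d-2})$, and all of these are mutually independent. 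Since $\vol{Z}=\frac{1}{(d-1)!}\abs{\det M}$ with $M=(Z_2-Z_1\mid\dots\mid Z_d-Z_1)$, and the first row of $M$ is $(s_2-s_1,\dots,s_d-s_1)$, expanding $\det M$ along that row gives $\det M=\sum_{j=1}^{d-1}(s_{j+1}-s_1)\,c_j$, where the cofactors $c_j$ depend only on $V_1,\dots,V_d$. In particular $\det M=\sum_{k=1}^d b_k s_k$ with the $b_k$ functions of $V_1,\dots,V_d$ only and $\sum_{k=1}^d b_k=0$, and a short computation identifies $\abs{b_k}$ as $(d-2)!$ times the $(d-2)$-dimensional volume of the simplex with vertex set $\{V_1,\dots,V_d\}\setminus\{V_k\}$, for each $k$.

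Conditioning on $V_1,\dots,V_d$ and using $\sum_k b_k=0$, I would write $\det M=\sum_k b_k(s_k-\e s_1)$, a sum of independent logconcave random variables, hence logconcave by Pr\'ekopa's theorem, and mean zero; so \cref{lem:k} applies conditionally and gives
\[
\e\bigl(\abs{\det M}\bigm|V_1,\dots,V_d\bigr)\ \ge\ \tfrac18\sqrt{\var(s_1)\sum\nolimits_k b_k^2}\ \ge\ \tfrac18\sqrt{\bigl(1-\tfrac2\pi\bigr)\sum\nolimits_k b_k^2},
\]
using that the variance of a standard Gaussian conditioned to $[-t,\infty)$ is at least $1-\tfrac2\pi$ for every $t\ge0$ (a short computation; the infimum, the variance of the half-normal, is attained at $t=0$). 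By Cauchy--Schwarz, $\sqrt{\sum_k b_k^2}\ge\frac1{\sqrt d}\sum_k\abs{b_k}$, and on taking expectations, exchangeability of $V_1,\dots,V_d$ makes the $d$ quantities $\e\abs{b_k}$ all equal, each being $(d-2)!$ times the expected $(d-2)$-dimensional volume of the simplex spanned by $d-1$ i.i.d.\ $N(0,I_{d-2})$ points, which by \cref{prop:gaussiansimplex} equals $(d-2)!\,\frac{\sqrt{d-1}}{2^{(d-2)/2}\Gamma(d/2)}$. Hence
\[
\e\bigl(\vol{Z}\bigr)=\frac{1}{(d-1)!}\,\e\bigl(\abs{\det M}\bigr)\ \ge\ \frac{\sqrt{1-\tfrac2\pi}}{8}\cdot\frac{\sqrt d}{\sqrt{d-1}\,2^{(d-2)/2}\,\Gamma(d/2)}.
\]
Finally, a standard bound on Gamma-function ratios (e.g.\ Wendel's inequality), giving $\Gamma\bigl(\tfrac{d+1}{2}\bigr)/\Gamma\bigl(\tfrac d2\bigr)\ge\sqrt{(d-1)/2}$, converts this into exactly $\sqrt{1-\tfrac2\pi}\,\tfrac{\sqrt d}{2^{(d+5)/2}\Gamma((d+1)/2)}$, and Stirling's formula shows the latter is $(e/d)^{d/2}2^{o(d)}$ with the $o(d)$ independent of $H$.

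The heart of the argument is the identity $\det M=\sum_k b_k s_k$ with $\sum_k b_k=0$: this is exactly what forces the truncated---and hence non-symmetric---variables $s_k$ to enter $\det M$ only through their centered versions, so that the mean-zero hypothesis of \cref{lem:k} becomes available after conditioning on the $V_i$. A second point requiring care is that in the Cauchy--Schwarz step one must keep all $d$ coefficients $b_k$, not merely the $d-1$ maximal minors of $(V_2-V_1\mid\dots\mid V_d-V_1)$; discarding one would lose a factor of order $\sqrt d$, and the stated bound would then fail for $d\ge3$.
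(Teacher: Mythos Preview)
Your argument is correct and shares its core insight with the paper's proof: the truncated first coordinates $s_k$ enter the determinant linearly with coefficients summing to zero, so their (nonzero) common mean cancels and \cref{lem:k} applies to a genuinely mean-zero logconcave variable; combined with the variance bound $\var(s_1)\ge 1-2/\pi$, this reduces the problem to an untruncated Gaussian simplex.

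The organization differs. The paper appends a row of ones to $Z$ to obtain a square matrix $W$, writes $\abs{\det W}$ as a product of row-to-subspace distances, and observes that the unit normal $V$ to $\linspan W_{2\ldots d}$ is orthogonal to the all-ones row, which is exactly your condition $\sum_k b_k=0$ in disguise. Replacing only the truncated row $W_1$ by a standard Gaussian row then lands directly on the expected volume of a $(d-1)$-dimensional Gaussian simplex and yields the stated constant without further manipulation. Your cofactor expansion instead computes the conditional variance explicitly and, after Cauchy--Schwarz, lands on the $(d-2)$-dimensional Gaussian simplex; matching the stated constant then needs the extra step $\Gamma(\tfrac{d+1}{2})/\Gamma(\tfrac d2)\ge\sqrt{(d-1)/2}$. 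Both routes are short; the paper's avoids the Cauchy--Schwarz and Gamma-ratio steps, while yours makes the role of the zero-sum coefficients more explicit.

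One small quibble: in your closing remark, dropping $b_1$ from the Cauchy--Schwarz bound costs only a factor $\sqrt{d/(d-1)}$, not ``order $\sqrt d$''. Your conclusion that the stated constant would then fail for $d\ge 3$ is nevertheless correct, since Wendel's \emph{upper} bound gives $\Gamma(\tfrac{d+1}{2})/\Gamma(\tfrac d2)\le\sqrt{d/2}$ with strict inequality.
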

\begin{proof}
The idea of the proof is to compare $Z$ with the Gaussian case (namely, without truncation). It is easier to do this for the second moment instead of the first, and one can relate the first and the second moments via Jensen's inequality and a suitable reverse for our case, \cref{lem:k}. 

By applying a rotation it is enough to prove for $H = \{x \in \RR^{d-1} \suchthat x_1 \leq t\}$ with $t\geq 0$.
Let $W$ be $Z$ with a row of ones appended. 
Then 
\begin{equation}\label{eq:detvol}
\vol{Z}/d = \abs{\det(W)}/d!.
\end{equation}
That is, $\vol{Z} = \abs{\det(W)}/(d-1)!$.
Let $W_1, \dotsc, W_d$ be the rows of $W$.
Let $A = \{ x \in \RR^d \suchthat (\forall i) x_i \leq t \}$.
Note that $W_1$ is distributed as standard Gaussian truncated to $A$.
We have $\abs{\det(W)} = \prod_{i=1}^d \dist(W_i, \linspan W_{(i+1)...d})$ (where $\dist(\cdot, \cdot)$ denotes point-subspace distance) and
\begin{align}
\e\bigl( \abs{\det(W)}\bigr)
&= \e\Bigl( \dist(W_1, \linspan W_{2...d}) \prod_{i=2}^d \dist(W_i, \linspan W_{(i+1)...d}) \Bigr) \label{eq:det}\\
&= \e\Bigl( \e\bigl(\dist(W_1, \linspan W_{2...d}) \bigm| W_{2...d} \bigr) \prod_{i=2}^d \dist(W_i, \linspan W_{(i+1)...d}) \Bigr).\nonumber
\end{align}
\details{total expectation + pull out known factors}

Let $v \in \RR^d$ be such that $\sum_{i=1}^d v_i = 0$ and $\norm{v} = 1$. 
Using \cref{lem:k}, $\e (v^T W_1) = 0$, and the fact that the variance of a Gaussian truncated to $(-\infty, t]$ with $t\geq 0$ is at least $1- 2/\pi$
we get
\begin{align*}
\e( \abs{v^T W_1} ) \geq \frac{1}{8} \sqrt{\e((v^T W_1)^2)} = \frac{1}{8} \sqrt{\var(v^T W_1)} \geq \frac{1}{8} \sqrt{1-\frac{2}{\pi}} := c'.
\end{align*}

Now, to express $ \dist(W_1, \linspan W_{2...d})$, let $V$ be a random vector that is a unit vector normal to $\linspan W_{2...d}$ (sign will not matter) and let $W_1'$ be an independent standard Gaussian in $\RR^d$. 
We have the following comparison inequality between $W_1$ (truncated Gaussian) and $W_1'$ (not truncated), using moment inequalities and the fact that, conditionioning on $W_{2...d}$, vector $V$ is a fixed unit vector perpedicular to the all ones vector $W_d$ so that our analysis for $v$ above applies:
\begin{align*}
\e\bigl( \dist(W_1, \linspan W_{2...d}) \bigm| W_{2...d} \bigr) 
&= \e\bigl( \abs{V^T W_1} \bigm| W_{2...d} \bigr) \\
&\geq c' \\
&= c' \sqrt{ \e\bigl( \dist(W_1', \linspan W_{2...d})^2 \bigm| W_{2...d} \bigr) } \\
&\geq c' \e \bigl( \dist(W_1', \linspan W_{2...d}) \bigm| W_{2...d} \bigr). 
\end{align*}
This in \eqref{eq:det} implies, defining $W'$ as $W$ with the first row $W_1$ substituted by $W_1'$:
\begin{align*}
\e( \abs{\det(W)}) 
&\geq c' \e\Bigl( \e\bigl(\dist(W_1', \linspan W_{2...d}) \bigm| W_{2...d} \bigr) \prod_{i=2}^d \dist(W_i, \linspan W_{(i+1)...d}) \Bigr) \\
&= c' \e\Bigl( \dist(W_1', \linspan W_{2...d}) \prod_{i=2}^d \dist(W_i, \linspan W_{(i+1)...d} ) \Bigr) \\
&= c' \e(  \abs{\det(W')}) \\
&= c' \frac{(d-1)! \sqrt{d}}{2^{\frac{d-1}{2}}\Gamma(\frac{d+1}{2})} \quad \text{(using \cref{prop:gaussiansimplex} and the idea in \cref{eq:detvol})}.
\end{align*}
Thus
\begin{align*}
\e\bigl( \vol{Z} \bigr) 
&= \frac{\e\bigl( \abs{\det(W)}\bigr)}{(d-1)!}  
\geq \frac{c' \sqrt{d}}{2^{\frac{d-1}{2}}\Gamma(\frac{d+1}{2})}. \details{\approx \poly(d) (e/d)^{d/2}}\qedhere
\end{align*}
\end{proof}

We will now complete the proof of \cref{thm:estranged}. 
Most of the proof is in the following lemma (\cref{lem:pair}), which estimates the probability that a fixed partition of the random points is a pair of facets. \Cref{thm:estranged} then follows by linearity of expectation.
The proof of \cref{lem:pair} is somewhat similar to the proof of \cite[Theorem 1.3]{HR} which gives an upper bound for the
variance of the number of facets of a Gaussian random polytope in the case where the dimension is fixed and the number of points increases. The main difficulty in the proof of both \cite[Theorem 1.3]{HR} and \cref{lem:pair} is to prove an upper bound for the probability that given pair of subsets of vertices are both facets of the polytope. In contrast to \cite[Theorem 1.3]{HR}, our \cref{lem:pair} is meaningful when the dimension increases with the number of points. However, \cref{lem:pair} does not give any bound on the variance because we only consider pairs of facets with no points in common.

Let $F(P)$ be the set of facets (as a family of subsets of vertices) of polytope $P$.
\begin{lemma}\label{lem:pair}
Let $X, Y$ be two independent sets of $d$ i.i.d.\ Gaussian random points in $\RR^d$.
Then
\[
\pr\bigl(X, Y \in F([X,Y])\bigr) = C_{\ref*{lem:pair}}^{d+o(d)},
\]
where
\[
C_{\ref*{lem:pair}} :=
\sup_{\substack{\rho \geq 0 \\ w \in [-1,1]}} e^{-\rho^2}\Phi\left(\frac{\rho(1-w)}{\sqrt{1-w^2}}\right)^2 \sqrt{1-w^2} \approx 0.4424.
\]
\end{lemma}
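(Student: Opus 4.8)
\textbf{Proof proposal for \cref{lem:pair}.}

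The plan is to compute $\pr\bigl(X,Y \in F([X,Y])\bigr)$ via the affine Blaschke-Petkantschin formula applied twice, once to the $d$-subset $X$ and once to the $d$-subset $Y$. Write $X = (X_1,\dotsc,X_d)$ and $Y = (Y_1,\dotsc,Y_d)$. The event that $X$ is a facet of $[X,Y]$ is the event that the hyperplane $\aff(X)$ has all of $Y_1,\dotsc,Y_d$ strictly on one side; similarly for $Y$. First I would use the Blaschke-Petkantschin change of variables to replace integration over the Gaussian positions of $X_1,\dotsc,X_d$ by integration over (i) the affine hyperplane $E = \aff(X)$ (equivalently, a unit normal direction $u \in S^{d-1}$ and a signed distance $h \in \RR$ from the origin), and (ii) the positions of the $X_i$ within that hyperplane; the Jacobian produces the factor $(d-1)! \vol[\big]{[X_1,\dotsc,X_d]}$ (the $(d-1)$-dimensional volume of the simplex they span inside $E$), as in \cite[Theorem 7.2.7]{schneider}. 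Doing this for both $X$ and $Y$ leaves an integral over two hyperplanes $E_X = (u,h)$, $E_Y = (v,g)$, over in-hyperplane simplices, and over the Gaussian positions of the points, with the two ``all on one side'' constraints now expressed as sign conditions on the inner products $\langle u, Y_i\rangle - h$ and $\langle v, X_i \rangle - g$.

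The key simplification is that, conditioned on the two hyperplanes, the $2d$ points factor: each $Y_i$ is a Gaussian in $\RR^d$ constrained to lie on $E_Y$ (its in-hyperplane position contributes the simplex-volume density) and to satisfy $\langle u, Y_i \rangle \geq h$ (or $\leq h$), and symmetrically each $X_i$ lies on $E_X$ with $\langle v, X_i \rangle$ on the chosen side of $g$. So the $d$-fold product over the $Y_i$'s contributes $\e\bigl(\vol{\cdot}\bigr)$ for the truncated in-hyperplane Gaussians times $\Phi(\cdot)^d$-type factors for the halfspace constraint, and likewise for the $X_i$'s. The single scalar governing the interaction of the two hyperplanes is $w := \langle u, v\rangle \in [-1,1]$: once $w$, $h$, $g$ are fixed, the one-dimensional marginal of $\langle v, X_i\rangle$ given $X_i \in E_X$ is Gaussian with an explicit mean/variance depending on $(w,h)$, so the probability that all $d$ of the $X_i$ fall on one side of $g$ is $\Phi$ of an explicit argument raised to the $d$. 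After collecting: a Gaussian density factor $e^{-d h^2/2} e^{-d g^2/2}$ (from the distance-to-origin coordinates of the two hyperplanes, roughly), the two truncated-simplex volume expectations, which by \cref{lem:lowerbound} (and \cref{prop:gaussiansimplex} for the matching upper bound) are $(e/d)^{d/2} 2^{o(d)}$ each, and the two $\Phi(\cdot)^d$ factors. The $(e/d)^{d}$ from the volumes cancels against the normalization $(2\pi)^{-d^2/2}$-type constants and the $d^{d}$ from Stirling in the Jacobian, in the standard way (cf.\ the computation in \cref{thm:k-facet} and in \cite{HMR,HR}), leaving an integral of the form $\int c(w,h,g)^{d+o(d)}\,\ud w\,\ud h\,\ud g$ over a bounded region for the integrand base $c$. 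Then \cref{prop:asymptotic} (after reducing to a compact region of integration exactly as in the proof of \cref{thm:k-facet}, using that the integrand decays and that the supremum is attained in the interior) gives $\pr(X,Y\in F([X,Y])) = (\sup c)^{d+o(d)}$, and the remaining task is to show $\sup c$ equals the stated $C_{\ref*{lem:pair}}$. Reparametrizing $h = \rho\sqrt{\text{(something)}}$, $g$ similarly, and carrying out the one-dimensional conditional-Gaussian computation for the argument of $\Phi$ should turn the optimization into $\sup_{\rho\geq 0,\, w\in[-1,1]} e^{-\rho^2}\Phi\bigl(\rho(1-w)/\sqrt{1-w^2}\bigr)^2\sqrt{1-w^2}$; numerically maximizing gives $\approx 0.4424$, and one checks $C_{\ref*{lem:pair}} \in (0,1/2)$ (positivity is clear from plugging in, e.g., $\rho$ small; the bound $<1/2$ follows since $e^{-\rho^2}\sqrt{1-w^2}\le 1$ and $\Phi(\cdot)^2 < 1$, with the product strictly below $1/2$).

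The main obstacle I expect is the double application of Blaschke-Petkantschin and the bookkeeping of the resulting conditional distributions: one must correctly identify, given the two hyperplanes $(u,h)$ and $(v,g)$, the joint law of an in-$E_X$ Gaussian point together with its signed distance to $E_Y$, and verify that these decouple across the $d$ points and across $X$ versus $Y$ in just the right way so that the exponent collapses to a single base $c(w,h,g)$. A secondary subtlety is matching the lower bound from \cref{lem:lowerbound} with a corresponding upper bound on $\e(\vol{Z})$ for the truncated in-hyperplane Gaussian simplices — but since truncation of each coordinate to a halfspace containing the origin can only decrease the second moment, $\e(\vol{Z}) \le \e(\vol{\text{untruncated}}) = (e/d)^{d/2}2^{o(d)}$ by \cref{prop:gaussiansimplex} and Jensen, so the two-sided bound $\e(\vol{Z}) = (e/d)^{d/2}2^{o(d)}$ holds and the $o(d)$ is uniform in the halfspace. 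Finally, justifying the passage to a bounded region of integration requires a tail estimate on $(u,h,v,g)$-space; the Gaussian factors $e^{-dh^2/2}e^{-dg^2/2}$ control $h,g$, and $w$ already ranges over the compact set $[-1,1]$, so this should go through as in \cref{thm:k-facet}.
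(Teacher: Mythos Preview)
Your approach is essentially the paper's: double Blaschke--Petkantschin, factor the integrand over the two hyperplanes into in-hyperplane simplex volumes times $\Phi^d$-type halfspace probabilities, bound the simplex volumes, and apply \cref{prop:asymptotic}. A few genuine gaps, though.

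The most serious is your claim that ``the two-sided bound $\e(\vol{Z}) = (e/d)^{d/2}2^{o(d)}$ holds and the $o(d)$ is uniform in the halfspace.'' The upper side is fine (Blaschke's formula together with the fact that truncation to \emph{any} halfspace gives covariance with determinant $\leq 1$, via Brascamp--Lieb), but the lower side from \cref{lem:lowerbound} requires the halfspace to contain the origin of $\RR^{d-1}$, and for arbitrary $(\rho_1,\theta_1,\rho_2,\theta_2)$ it need not. The paper handles this by, for the lower bound on the probability, replacing the indicator $\ind\{x \in H_+^d \cup H_-^d\}$ by $\ind\{x \in H_M^d\}$, where $H_M$ is whichever of $H_\pm(\rho_2,\theta_2)$ contains the point of $H(\rho_1,\theta_1)$ nearest the origin; this is a valid lower bound on the indicator, and now \cref{lem:lowerbound} applies to the resulting (origin-containing) in-hyperplane halfspace. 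Without this device your lower bound is unjustified.

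Two further points you pass over. First, the factor $\sqrt{1-w^2}$ in $C_{\ref*{lem:pair}}$ does not come from the conditional $\Phi$ computation but from the density of $w=\inner{u}{v}$ when $u,v$ are independent uniform on $S^{d-1}$, which is proportional to $(1-w^2)^{(d-3)/2}$; you need this when collapsing the $(u,v)$-integration to a $w$-integration. Second, the ``all on one side'' constraint for each of $X$ and $Y$ yields a sum of two terms, so the full integrand is a sum of four terms; the paper checks that the $(-,-)$ term has the largest base $C_{\ref*{lem:pair}}$ while the other three give strictly smaller constants ($\approx 0.355$ and $1/4$), so they are negligible. Finally, the reduction of the three-variable $\sup$ over $(\rho_1,\rho_2,w)$ to the two-variable form in the statement is not by a substitution but by observing that the integrand is symmetric and logconcave in $(\rho_1,\rho_2)$ for fixed $w$ (using logconcavity of $\Phi$), so the $\sup$ is attained on the diagonal $\rho_1=\rho_2=:\rho$.
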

\begin{proof}
Let $H(\rho, \theta) = \{x \in \RR^d \suchthat \theta \cdot x = \rho \}$, $ H_-(\rho, \theta) = \{x \in \RR^d \suchthat \theta \cdot x < \rho \}$, and $H_+(\rho, \theta) = \{x \in \RR^d \suchthat \theta \cdot x > \rho \}$.
Let $f_X(\cdot)$ denote the density function of random variable $X$.
We will use the affine Blaschke-Petkantschin formula as stated in \cite[Theorem 7.2.7]{schneider}.
Let $c_d = \details{(b_{d, d-1} (d-1)!)^2 (2/\omega_d)^2=} (d-1)!^2$\details{factor $(2/\omega_d)^2$ is to go from my hyperspherical coordinates to their normalization in page 168}.
\details{Note that the next part only uses independence of $X$ and $Y$: $X, Y$ can be two independent sets of $d$ random points in $\RR^d$ with joint density $f_X(x) f_Y(y)$.}
Shorthand notation $\vol{x}$ denotes the $(d-1)$-dimensional volume of the simplex determined by the $d$ points in $d$-tuple or matrix with $d$ columns $x$. 
Lowercase $x$ is a matrix and an integration variable and it represents a particular value of random variable $X$.
We have
\begin{align*}
\pr\bigl(&X, Y \in F([X,Y])\bigr) \\
&= \int_{\RR^{d^2}} \int_{\RR^{d^2}} \ind\{x \in F([x,y])\} \ind\{y \in F([x,y])\} f_X(x) f_Y(y) \ud x \ud y \\
&= c_d\int_{\RR_+^2} \int_{{(S^{d-1})}^2} \int_{{H(\rho_1, \theta_1)}^d} \int_{H(\rho_2, \theta_2)^d} \ind\{x \in F([x,y])\} \ind\{y \in F([x,y])\} \\
&\qquad  \times \vol{x} \vol{y} f_X(x) f_Y(y) \ud y \ud x \ud \theta_1 \ud \theta_2 \ud \rho_1 \ud \rho_2 \\
%
%
&= c_d \int\limits_{\RR_+^2} \int\limits_{{(S^{d-1})}^2} \biggl(\int\limits_{{H(\rho_1, \theta_1)}^d} \ind\{x \in H_+(\rho_2, \theta_2)^d \cup H_-(\rho_2, \theta_2)^d \} \vol{x} f_X(x) \ud x \biggr) \\
&\qquad \biggl(\int_{{H(\rho_2, \theta_2)}^d} \ind\{y \in H_+(\rho_1, \theta_1)^d \cup H_-(\rho_1, \theta_1)^d \} \vol{y} f_Y(y) \ud y \biggr) \numberthis \label{eq:lowerboundstart} \\
&\qquad \ud \theta_1 \ud \theta_2 \ud \rho_1 \ud \rho_2. 
\end{align*}

\paragraph{Upper bound.} For the upper bound we continue from \eqref{eq:lowerboundstart} as follows:
\begin{equation}\label{equ:sum}
\begin{aligned}
\pr\bigl(&X, Y \in F([X,Y])\bigr) \\
&= c_d \sum_{s, s' \in \{-,+\}} \int_{\RR_+^2} \int_{{(S^{d-1})}^2} 
\biggl(\int_{{H(\rho_1, \theta_1)}^d} \ind\{x \in H_{s}(\rho_2, \theta_2)^d\} \vol{x} f_X(x) \ud x \biggr) \\
&\qquad \biggl(\int_{{H(\rho_2, \theta_2)}^d} \ind\{y \in H_{s'}(\rho_1, \theta_1)^d\} \vol{y} f_Y(y) \ud y \biggr) 
\ud \theta_1 \ud \theta_2 \ud \rho_1 \ud \rho_2.
\end{aligned}
\end{equation}

For the next step we will need the following notation:  
$Z= (Z_1, \dotsc, Z_d) \in \RR^{(d-1) \times d}$ is i.i.d.\ standard Gaussian (identifying $H(\rho_1, \theta_1)$ with $\RR^{d-1}$). 
Also, $h_s(\rho_1, \theta_1, \rho_2, \theta_2)$ for $s \in \{+,-\}$ is the halfspace $H_s(\rho_2, \theta_2) \cap H(\rho_1, \theta_1)$ in $\RR^{d-1}$ (identifying $H(\rho_1, \theta_1)$ with $\RR^{d-1}$, see \cref{fig:halfspaces}).
Finally, $E$ is the event $\{ Z \in h_-(\rho_1, \theta_1, \rho_2, \theta_2)^d \}$,
and $\mu$ is the Gaussian probability measure in $\RR^{d-1}$.
\begin{figure}
\begin{center}
\resizebox{.5\columnwidth}{!}{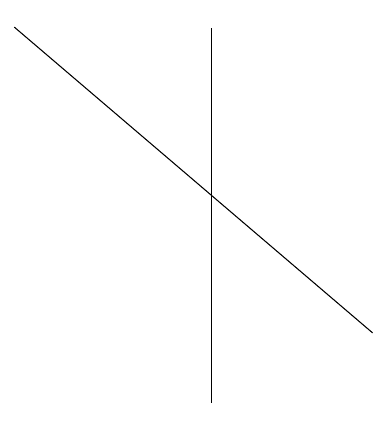}
\caption{Halfspaces in the proof of \cref{lem:pair}}\label{fig:halfspaces}
\end{center}
\end{figure}

We have
\begin{align*}
\int_{{H(\rho_1, \theta_1)}^d} &\ind\{x \in H_-(\rho_2, \theta_2)^d \} \vol{x} f_X(x) \ud x \\
&= \left(\int_{{H(\rho_1, \theta_1)}^d} f_X(x) \ud x\right) \e_Z\bigl(\vol{Z} \ind\event \bigr) \\
&= \left(\int_{{H(\rho_1, \theta_1)}^d} f_X(x) \ud x\right) \pr_Z\bigl(\event\bigr) \e_Z\bigl( \vol{Z} \bigm| \event \bigr) \\
&= \frac{e^{-d\rho_1^2/2}}{(2\pi)^{d/2}} \bigl(\mu(h_-(\rho_1, \theta_1, \rho_2, \theta_2))\bigr)^d \e_Z \bigl( \vol{Z} \bigm| \event \bigr).
\end{align*}
Let $A$ be the covariance matrix of the Gaussian distribution truncated to $h_-(\rho_1, \theta_1, \rho_2, \theta_2)$. 
Namely, $A = \cov\bigl(Z_1 \giventhat Z_1 \in h_-(\rho_1, \theta_1, \rho_2, \theta_2) \bigr)$.
Note that the variance of any univariate marginal of $Z_1$ conditioned on $Z_1 \in h_-(\rho_1, \theta_1, \rho_2, \theta_2)$ is at most 1 (say, by the Brascamp-Lieb inequality \cite[Section 5]{BRASCAMP1976366}) and this implies $\det A \leq 1$. 
Using moment inequalities and \cref{prop:blaschke} (Blaschke's formula):
\begin{align*}
\e_Z\bigl( \vol{Z} \giventhat \event \bigr) 
&\leq \sqrt{ \e_Z\bigl( \vol{Z}^2 \bigm| \event \bigr) } 
= \sqrt{ \frac{d}{(d-1)!} \det A} 
\leq \sqrt{\frac{d}{(d-1)!}}.
\end{align*}

Now, to express the Gaussian measure of $h_-(\rho_1, \theta_1, \rho_2, \theta_2)$, we need the signed distance of its boundary to the origin of $\RR^{d-1}$ (the sign is positive if the halfspace contains the origin). 
The signed distance is $t = t(\rho_1, \theta_1, \rho_2, \theta_2) = \frac{\rho_2-\rho_1 \cos \alpha}{\sin \alpha}$, where $\alpha \in [0,\pi]$ is the angle between $\theta_1$ and $\theta_2$ (see \cref{fig:halfspaces}).\footnote{To see this, note first that it is enough to peform this calculation in $\RR^2$. Assume without loss of generality that $\theta_1=(1,0)$ and $\theta_2= (\cos \alpha, \sin \alpha)$. Then $t$ is the $y$-coordinate of the point intersection of the lines $(x,y)\cdot \theta_1 = \rho_1$ and $(x,y)\cdot \theta_2 = \rho_2$, which implies $x=\rho_1$ and $\rho_1 \cos \alpha + y \sin \alpha = \rho_2$. The claim follows.}  
In other words, $t = \frac{\rho_2-\rho_1 \inner{\theta_1}{\theta_2}}{\sqrt{1-(\inner{\theta_1}{\theta_2})^2}}$.
To understand this quantity, it will be helpful in the next calculation to reinterpret certain integrals as expectations and to think of $\theta_1$ and $\theta_2$ as random unit vectors.
With that interpretation, we will use the following fact: the distribution of $W := \inner{\theta_1}{\theta_2}$ has density $w \mapsto \frac{\Gamma(\frac{d}{2})}{\sqrt{\pi}\Gamma(\frac{d-1}{2})} (1-w^2)^{\frac{d-3}{2}}$ with support $[-1,1]$.\footnote{To see this, without loss of generality we can assume that $\theta_2=e_1$. Then use Archimedes' idea, namely that the distribution of the first $d-2$ coordinates of $\theta_1$ is uniform in the unit $(d-2)$-dimensional ball. 
The claim follows then up to the normalization constant. 
The constant can be obtained by integration.}

Let $\omega_d = 2 \pi^{d/2}/\Gamma(d/2)$ be the area of the unit sphere in $\RR^d$. 
We determine the asymptotics of the first term in the sum in \cref{equ:sum} using \cref{prop:asymptotic} in the last step:
\begin{multline}\label{eq:upperbound}
\begin{aligned}
c_d &\int_{\RR_+^2} \int_{{(S^{d-1})}^2} 
\biggl(\int_{{H(\rho_1, \theta_1)}^d} \ind\{x \in H_-(\rho_2, \theta_2)^d \} \vol{x} f_X(x) \ud x \biggr) \\
&\qquad \times \biggl(\int_{{H(\rho_2, \theta_2)}^d} \ind\{y \in H_-(\rho_1, \theta_1)^d \} \vol{y} f_Y(y) \ud y \biggr) \ud \theta_1 \ud \theta_2 \ud \rho_1 \ud \rho_2
\end{aligned}\\
\begin{aligned}
&\leq \frac{c_d d}{(d-1)!(2\pi)^d} \int_{\RR_+^2} \int_{{(S^{d-1})}^2} 
e^{-\frac{d(\rho_1^2+\rho_2^2)}{2}} \bigl(\mu(h_-(\rho_1, \theta_1, \rho_2, \theta_2))\bigr)^d \\
&\qquad \times \bigl(\mu(h_-(\rho_2, \theta_2, \rho_1, \theta_1))\bigr)^d \ud \theta_1 \ud \theta_2 \ud \rho_1 \ud \rho_2 \\
%
%
%
&= \frac{d! \omega_d^2}{(2\pi)^d} \int_{\RR_+^2} e^{-\frac{d(\rho_1^2+\rho_2^2)}{2}} \e_{\theta_1, \theta_2}\Bigl( 
\bigl(\Phi(t(\rho_1, \theta_1, \rho_2, \theta_2))\bigr)^d   \\
&\qquad \times \bigl(\Phi(t(\rho_2, \theta_2, \rho_1, \theta_1))\bigr)^d \Bigr) \ud \rho_1 \ud \rho_2 \\
&= \frac{d! \omega_d^2}{(2\pi)^d} \int\limits_{\RR_+^2} e^{-\frac{d(\rho_1^2+\rho_2^2)}{2}} 
\e_W\biggl( \left(\Phi\Bigl(\frac{\rho_2-\rho_1 W}{\sqrt{1-W^2}}\Bigr)  
\Phi\Bigl(\frac{\rho_1-\rho_2 W}{\sqrt{1-W^2}}\Bigr)\right)^d \biggr) \ud \rho_1 \ud \rho_2 \\
&= \frac{d! \omega_d^2 \Gamma(\frac{d}{2})}{(2\pi)^d\sqrt{\pi}\Gamma(\frac{d-1}{2})} 
\int\limits_{\RR_+^2} e^{-\frac{d(\rho_1^2+\rho_2^2)}{2}} 
\int\limits_{-1}^1 \left(\Phi\left(\frac{\rho_2-\rho_1 w}{\sqrt{1-w^2}}\right)\Phi\left(\frac{\rho_1-\rho_2 w}{\sqrt{1-w^2}}\right)\right)^d  \\
&\qquad \times (1-w^2)^{\frac{d-3}{2}} \ud w \ud \rho_1 \ud \rho_2 \\
%
&\leq 2^{o(d)} \!\!\! \int\limits_{\RR_+^2} \int\limits_{-1}^1 
 \!\left( e^{\frac{-\rho_1^2-\rho_2^2}{2}}\Phi\Bigl(\frac{\rho_2-\rho_1 w}{\sqrt{1-w^2}}\Bigr)\Phi\Bigl(\frac{\rho_1-\rho_2 w}{\sqrt{1-w^2}}\Bigr)\sqrt{1-w^2}\right)^{d-3} 
\!\!\!\ud w \ud \rho_1 \ud \rho_2 \\
&= C_{\ref*{lem:pair}}^{d+o(d)},
\end{aligned}
\end{multline}
where
\begin{equation}\label{eq:constant}
C_{\ref*{lem:pair}} :=
\sup_{\substack{\rho_1, \rho_2 \geq 0 \\ w \in [-1,1]}} e^{-\frac{\rho_1^2+\rho_2^2}{2}}\Phi\left(\frac{\rho_2-\rho_1 w}{\sqrt{1-w^2}}\right)\Phi\left(\frac{\rho_1-\rho_2 w}{\sqrt{1-w^2}}\right) \sqrt{1-w^2} \approx 0.4424.
\end{equation}
\details{estimating sup with Mathematica.}
\details{
Similarly,
\begin{multline*}
\begin{aligned}
c_d\int_{\RR_+^2} &\int_{{(S^{d-1})}^2} 
\biggl(\int_{{H(\rho_1, \theta_1)}^d} \ind\{x \in H_+(\rho_2, \theta_2)^d \} \vol{x} f_X(x) dx \biggr) \\
&\qquad \biggl(\int_{{H(\rho_2, \theta_2)}^d} \ind\{y \in H_-(\rho_1, \theta_1)^d \} \vol{y} f_Y(y) dy \biggr) d \theta_1 d \theta_2 d \rho_1 d \rho_2
\end{aligned}\\
\begin{aligned}
&= (
\sup_{\rho_1, \rho_2 \geq 0, w \in [-1,1]} e^{-(\rho_1^2+\rho_2^2)/2}(1-\Phi(\frac{\rho_2-\rho_1 w}{\sqrt{1-w^2}}))\Phi(\frac{\rho_1-\rho_2 w}{\sqrt{1-w^2}}) \sqrt{1-w^2}
)^{d+o(d)} 
\end{aligned}
\end{multline*}

\begin{multline*}
\begin{aligned}
c_d\int_{\RR_+^2} &\int_{{(S^{d-1})}^2} 
\biggl(\int_{{H(\rho_1, \theta_1)}^d} \ind\{x \in H_+(\rho_2, \theta_2)^d \} \vol{x} f_X(x) dx \biggr) \\
&\qquad \biggl(\int_{{H(\rho_2, \theta_2)}^d} \ind\{y \in H_+(\rho_1, \theta_1)^d \} \vol{y} f_Y(y) dy \biggr) d \theta_1 d \theta_2 d \rho_1 d \rho_2
\end{aligned}\\
\begin{aligned}
&= (
\sup_{\rho_1, \rho_2 \geq 0, w \in [-1,1]} e^{-(\rho_1^2+\rho_2^2)/2}(1-\Phi(\frac{\rho_2-\rho_1 w}{\sqrt{1-w^2}}))(1-\Phi(\frac{\rho_1-\rho_2 w}{\sqrt{1-w^2}})) \sqrt{1-w^2}
)^{d+o(d)} 
\end{aligned}
\end{multline*}
}
The other three terms in \cref{equ:sum} have similar asymptotics, with $C_{\ref*{lem:pair}}$ replaced by 
\[
\sup_{\substack{\rho_1, \rho_2 \geq 0 \\ w \in [-1,1]}} e^{-\frac{\rho_1^2+\rho_2^2}{2}}\left(1-\Phi\left(\frac{\rho_2-\rho_1 w}{\sqrt{1-w^2}}\right)\right)\Phi\left(\frac{\rho_1-\rho_2 w}{\sqrt{1-w^2}}\right) \sqrt{1-w^2} \approx 0.355
\]
and
\[
\sup_{\substack{\rho_1, \rho_2 \geq 0 \\ w \in [-1,1]}} e^{-\frac{\rho_1^2+\rho_2^2}{2}}\left(1-\Phi\left(\frac{\rho_2-\rho_1 w}{\sqrt{1-w^2}}\right)\right)\left(1-\Phi\left(\frac{\rho_1-\rho_2 w}{\sqrt{1-w^2}}\right)\right) \sqrt{1-w^2} = 1/4.
\]
Namely, the first of the four terms in \cref{equ:sum} is asymptotically the largest and we have:
\[
\pr\bigl(X, Y \in F([X,Y])\bigr) \leq C_{\ref*{lem:pair}}^{d+o(d)}.
\]
Finally, note that the argument of $\sup$ in \cref{eq:constant} is logconcave and symmetric in $\rho_1, \rho_2$ for any fixed $w$ (using the known fact that $\Phi$ is logconcave).
This implies that its value at $\rho_1, \rho_2,w$ is less than or equal to its value at $(\rho_1+ \rho_2)/2, (\rho_1+ \rho_2)/2,w$ and therefore it is enough to maximize for $\rho_1 = \rho_2$ and we have the simplified expression in the statement of the theorem.

\paragraph{Lower bound.} In \cref{eq:lowerboundstart}, consider the term 
\[
\ind\{x \in H_+(\rho_2, \theta_2)^d \cup H_-(\rho_2, \theta_2)^d \}.
\] 
Note that (a.s.) one of $H_+(\rho_2, \theta_2)$ and $H_-(\rho_2, \theta_2)$ is the ``biggest'' in the particular sense that it contains in its interior the point in $H(\rho_1, \theta_1)$ (the domain of the innermost integral) that is closest to the origin (namely point $\rho_1 \theta_1$).
More precisely, let $H_M(\rho_1, \theta_1,\rho_2, \theta_2)$ be (a.s.) the halfspace among $H_+(\rho_2, \theta_2)$ and $H_-(\rho_2, \theta_2)$ that contains $\rho_1 \theta_1$ in its interior.
Then
\begin{equation}\label{eq:indicators}
\ind\{x \in H_+(\rho_2, \theta_2)^d \cup H_-(\rho_2, \theta_2)^d \} \geq \ind\{x \in H_M(\rho_1, \theta_1,\rho_2, \theta_2)^d \}.
\end{equation}

Let $Z= (Z_1, \dotsc, Z_d) \in \RR^{(d-1) \times d}$ be i.i.d.\ standard Gaussian (identifying $H(\rho_1, \theta_1)$ with $\RR^{d-1}$), let $E'$ be the event $\{ Z \in h_M(\rho_1, \theta_1, \rho_2, \theta_2)^d \}$, and let $h_{M}(\rho_1, \theta_1, \rho_2, \theta_2)$ be the halfspace $H_M(\rho_1, \theta_1,\rho_2, \theta_2) \cap H(\rho_1, \theta_1)$ in $\RR^{d-1}$ (identifying $H(\rho_1, \theta_1)$ with $\RR^{d-1}$).
Now, using \cref{lem:lowerbound} (a lower bound on the expected volume of a random simplex in a halfspace), we have
\begin{equation}\label{eq:integraltoexpectation}
\begin{aligned}
\int_{{H(\rho_1, \theta_1)}^d} &\ind\{x \in H_M(\rho_1, \theta_1,\rho_2, \theta_2)^d \} \vol{x} f_X(x) \ud x \\
&= \e_Z (\vol{Z}\ind E' ) \int_{{H(\rho_1, \theta_1)}^d} f_X(x) \ud x   \\
&= \pr_Z ( E' ) \e_Z\bigl( \vol{Z} \bigm| E' \bigr) \int_{{H(\rho_1, \theta_1)}^d} f_X(x) \ud x \\
&= \frac{e^{-d\rho_1^2/2}}{(2\pi)^{d/2}} \bigl(\mu(h_{M}(\rho_1, \theta_1, \rho_2, \theta_2))\bigr)^d \e_Z\bigl( \vol{Z} \bigm| E' \bigr) \\
&\geq 2^{o(d)} (e/d)^{d/2} \frac{e^{-d\rho_1^2/2}}{(2\pi)^{d/2}} \bigl(\mu(h_{M}(\rho_1, \theta_1, \rho_2, \theta_2))\bigr)^d \\
&\geq 2^{o(d)} (e/d)^{d/2} \frac{e^{-d\rho_1^2/2}}{(2\pi)^{d/2}} \bigl(\mu(h_{-}(\rho_1, \theta_1, \rho_2, \theta_2))\bigr)^d.
\end{aligned}
\end{equation}
%
%
Using a calculation similar to \cref{eq:upperbound} but starting at \cref{eq:lowerboundstart} and using \cref{eq:indicators,eq:integraltoexpectation} twice we get
\begin{align*}
&\pr\bigl(X, Y \in F([X,Y])\bigr) \\
&= c_d \int\limits_{\RR_+^2} \int\limits_{{(S^{d-1})}^2} 
\biggl(\int\limits_{{H(\rho_1, \theta_1)}^d} \ind\{x \in H_+(\rho_2, \theta_2)^d \cup H_-(\rho_2, \theta_2)^d \} \vol{x} f_X(x) \ud x \biggr) \\
&\quad \biggl(\int\limits_{{H(\rho_2, \theta_2)}^d} \ind\{y \in H_+(\rho_1, \theta_1)^d \cup H_-(\rho_1, \theta_1)^d \} \vol{y} f_Y(y) \ud y \biggr) 
\ud \theta_1 \ud \theta_2 \ud \rho_1 \ud \rho_2 \\
&\geq c_d 2^{o(d)} \left(\frac{e}{2\pi d}\right)^d \int_{\RR_+^2} \int_{{(S^{d-1})}^2} e^{-d(\rho_1^2 + \rho_2^2)/2} \\
&\quad \bigl(\mu(h_{-}(\rho_1, \theta_1, \rho_2, \theta_2)) \mu(h_{-}(\rho_2, \theta_2, \rho_1, \theta_1))\bigr)^d \ud \theta_1 \ud \theta_2 \ud \rho_1 \ud \rho_2 \\
&= C_{\ref*{lem:pair}}^{d+o(d)}.\qedhere
\end{align*}
\end{proof}

\begin{proof}[Proof of \cref{thm:estranged}]
Immediate from \cref{lem:pair} and the fact that the number of $d$-subsets of $X$ is $\binom{2d}{d} = 4^{d+o(d)}$.
\end{proof}

\paragraph{Acknowledgments.}
We would like to thank K\'aroly J.\ B\"or\"oczky and Daniel Hug for helpful discussions.
This material is based upon work supported by the National Science Foundation under Grants CCF-1657939, CCF-1934568 and CCF-2006994.
This material is also based upon work supported by the National Science Foundation under Grant No. DMS-1929284 while the second author was in residence at the Institute for Computational and Experimental Research in Mathematics in Providence, RI, during the ``Harmonic Analysis and Convexity'' program.

\bibliographystyle{abbrv}
\bibliography{bib}

\end{document}